\documentclass[12pt]{amsart}
\usepackage{amscd,amsmath,amsthm,amssymb}
\usepackage{pstcol,pst-plot,pst-3d}%\usepackage[T1]{fontenc}
\usepackage{lmodern,pst-node}%\usepackage{geometry}
%\usepackage{lineno}
%
%------    GENERAL MACROS    -----
%
% Standard rings and fields, affine and projective space
%
\def\NZQ{\Bbb}               % the font for N,Z,Q,R,C

\def\ZZ{{\NZQ Z}}

%
%------------------------------------------------
% Symbols in "Fraktur"
%
\def\frk{\frak}               % font for "Fraktur"

\def\mm{{\frk m}}

%
%------------------------------------------------
\def\MI{{\mathcal I}}

\def\MF{{\mathcal F}}

\def\MF{{\mathcal F}}
\def\MG{{\mathcal G}}

% Small letters in bold
%

%
\def\opn#1#2{\def#1{\operatorname{#2}}} % to make operators
%------------------------------------------------
% Numerical invariants of rings, ideals, and modules
%
\opn\chara{char} \opn\length{\ell} \opn\pd{pd} \opn\rk{rk}
\opn\projdim{proj\,dim} \opn\injdim{inj\,dim} \opn\rank{rank}
\opn\depth{depth} \opn\grade{grade} \opn\height{height}
\opn\embdim{emb\,dim} \opn\codim{codim}

\opn\Tr{Tr} \opn\bigrank{big\,rank}
\opn\superheight{superheight}\opn\lcm{lcm}
\opn\trdeg{tr\,deg}%\emph{
\opn\reg{reg} \opn\lreg{lreg} \opn\ini{in} \opn\lpd{lpd}
\opn\size{size}\opn\bigsize{bigsize}
\opn\cosize{cosize}\opn\bigcosize{bigcosize}
\opn\sdepth{sdepth}\opn\sreg{sreg}
\opn\link{link}\opn\fdepth{fdepth}
\opn\Deg{Deg}
%------------------------------------------------
% Divisors
%
\opn\div{div} \opn\Div{Div} \opn\cl{cl} \opn\Cl{Cl}
%
%------------------------------------------------
% Subsets of the spectrum of a ring
%
\opn\Spec{Spec} \opn\Supp{Supp} \opn\supp{supp} \opn\Sing{Sing}
\opn\Ass{Ass} \opn\Min{Min}\opn\Mon{Mon} \opn\dstab{dstab} \opn\astab{astab}
%
%------------------------------------------------
% Standard operations on ideals and modules
%
\opn\Ann{Ann} \opn\Rad{Rad} \opn\Soc{Soc}
%
%------------------------------------------------
% Linear algebra and homology, endo- and automorphisms
%
\opn\Im{Im} \opn\Ker{Ker} \opn\Coker{Coker} \opn\Am{Am}
\opn\Hom{Hom} \opn\Tor{Tor} \opn\Ext{Ext} \opn\End{End}
\opn\Aut{Aut} \opn\id{id}

\opn\nat{nat}
\opn\pff{pf}%   \pf exists already
\opn\Pf{Pf} \opn\GL{GL} \opn\SL{SL} \opn\mod{mod} \opn\ord{ord}
\opn\Gin{Gin} \opn\Hilb{Hilb}\opn\sort{sort}
%
%------------------------------------------------
% Convexity
%
\opn\aff{aff} \opn\con{conv} \opn\relint{relint} \opn\st{st}
\opn\lk{lk} \opn\cn{cn} \opn\core{core} \opn\vol{vol}
\opn\link{link} \opn\star{star}\opn\lex{lex} \opn\sat{sat}
%------------------------------------------------
% Graded rings and Rees algebras
\opn\gr{gr}

%
%------------------------------------------------
% Polynomials and power series
%

\def\pot#1#2{#1[\kern-0.28ex[#2]\kern-0.28ex]}

%
%------------------------------------------------
% Direct and inverse limits
%
\opn\dirlim{\underrightarrow{\lim}}
\opn\inivlim{\underleftarrow{\lim}}
%
%
% Names with a meaning
%
\let\union=\cup

%
%------------------------------------------------
%

{}
\def\Implies{\ifmmode\Longrightarrow \else
        \unskip${}\Longrightarrow{}$\ignorespaces\fi}
\def\implies{\ifmmode\Rightarrow \else
        \unskip${}\Rightarrow{}$\ignorespaces\fi}
\def\iff{\ifmmode\Longleftrightarrow \else
        \unskip${}\Longleftrightarrow{}$\ignorespaces\fi}

\let\:=\colon
\newtheorem{Theorem}{Theorem}[section]
\newtheorem{Lemma}[Theorem]{Lemma}
\newtheorem{Corollary}[Theorem]{Corollary}
\newtheorem{Proposition}[Theorem]{Proposition}
\newtheorem{Remark}[Theorem]{Remark}

\newtheorem{Example}[Theorem]{Example}

%
% We like the var forms of some greek letters (as taught in German schools)
%
\let\epsilon\varepsilon
\let\kappa=\varkappa
%
%           We print on A4 paper
%
\textwidth=15cm \textheight=22cm \topmargin=0.5cm
\oddsidemargin=0.5cm \evensidemargin=0.5cm \pagestyle{plain}
%
%           The pf environment of AMSART needs a little help
%
\def\qed{\ifhmode\textqed\fi
      \ifmmode\ifinner\quad\qedsymbol\else\dispqed\fi\fi}
\def\textqed{\unskip\nobreak\penalty50
       \hskip2em\hbox{}\nobreak\hfil\qedsymbol
       \parfillskip=0pt \finalhyphendemerits=0}
\def\dispqed{\rlap{\qquad\qedsymbol}}

%
% ------    END OF GENERAL MACROS    -------
\opn\dis{dis}
\def\pnt{{\raise0.5mm\hbox{\large\bf.}}}

\opn\Lex{Lex}

%-- macro for local cohomology-----------------------------

%-- macro for a complicated condition for the extended
%-- Hochster's formula

\begin{document}
%\linenumbers
\title {Linear flags and Koszul filtrations}
\author {Viviana Ene, J\"urgen Herzog and Takayuki Hibi}

\thanks{The  first author was supported by the grant UEFISCDI,  PN-II-ID-PCE-2011-3-1023.}

\address{Faculty of Mathematics and Computer Science, Ovidius University, Bd.\ Mamaia 124,
 900527 Constanta, and
Simion Stoilow Institute of Mathematics of the Romanian Academy,
 Research group of the project  ID-PCE-2011-1023,
 P.O.Box 1-764,  Bucharest 014700,
  Romania} \email{vivian@univ-ovidius.ro}

\address{J\"urgen Herzog, Fachbereich Mathematik, Universit\"at Duisburg-Essen, Campus Essen, 45117
Essen, Germany} \email{juergen.herzog@uni-essen.de}

\address{Takayuki Hibi, Department of Pure and Applied Mathematics, Graduate School of Information Science and Technology,
Osaka University, Toyonaka, Osaka 560-0043, Japan}
\email{hibi@math.sci.osaka-u.ac.jp}

\subjclass[2010]{13C13, 13A30, 13F99,  05E40}
\keywords{}
\begin{abstract}
We show that the graded maximal ideal of a graded $K$-algebra $R$ has linear quotients for a suitable choice and  order of its generators if the defining ideal  of $R$ has a quadratic Gr\"obner basis with respect to the reverse lexicographic order, and show that this linear quotient property for algebras defined by binomial edge ideals characterizes closed graphs. Furthermore, for algebras defined by binomial edge ideals attached to a closed graph and for join-meet rings
attached to a finite distributive lattice we present explicit Koszul filtrations.
\end{abstract}
\maketitle

\section*{Introduction}
Let $K$ be a field. In this paper we consider standard graded $K$-algebras. Any such algebra is isomorphic to a $K$-algebra of the form $S/I$ where $S=K[x_1,\ldots,x_n]$ is the polynomial ring in the indeterminates $x_1,\ldots,x_n$ and $I\subset S$ is a graded ideal with $I\subset (x_1,\ldots,x_n)^2$. Let   $\mm$ is the graded maximal ideal of $S/I$. The $K$-algebra $R=S/I$ is called Koszul, if $K=R/\mm$ has a linear resolution. In other words, $R$ is Koszul, if the chain maps in the minimal graded free $R$-resolution of the residue class field of $R$ are given by matrices whose entries are linear forms.

Obviously, the polynomial ring $S$ itself is a Koszul algebra since the Koszul complex attached to the sequence $x_1,\ldots,x_n$ provides a linear (and finite) resolution of $K$. For $R=S/I$ with  $I\neq 0$ the graded minimal free $R$-resolution $F$ of the residue class field $R/\mm$ is infinite and there are examples by Roos \cite{R} which show that $F$ may be linear up to any given homological degree and then becomes non-linear. Thus it is not surprising that no algorithm  for testing Koszulness is known, and in fact, there may not exist any such algorithm. The more it is of interest to have some necessary conditions and also some sufficient conditions for Koszulness at hand. It is well-known that $I$ must be quadratically generated if $S/I$ is Koszul, and that $S/I$ is indeed Koszul if $I$ has a quadratic Gr\"obner basis.

More recently, filtrations have been considered to check whether a standard graded $K$-algebra is Koszul. This strategy has first been applied in the paper \cite{HHR} in which the authors introduced  strongly Koszul algebras which are defined via sequential conditions. Inspired by this concept, Conca, Trung and Valla \cite{CTV} introduced the  more flexible notion of  Koszul filtrations which is defined as follows: let $R$ be a standard graded  $K$-algebra with graded maximal ideal $\mm$. A {\em Koszul filtration} of $R$ is a family $\MF$ of ideals generated by linear forms with the property that $\mm \in \MF$ and that for each $I\in \MF$ with $I\neq 0$ there exists $J\in \MF$ with  $J\subset I$  such that $I/J$ is a cyclic module whose annihilator belongs to $\MF$. It is  shown in \cite[Proposition 1.2]{CTV} that  all the ideals $I$ belonging to a Koszul filtration have a linear  resolution. In particular, any standard graded $K$-algebra admitting a Koszul filtration is Koszul. It has been shown by  an example in \cite[Page 101]{CRV} that not each Koszul algebra has a Koszul filtration.

The question is how the property of a standard graded $K$-algebra to admit a Koszul filtration is related to the property that its defining ideal admits a quadratic Gr\"obner basis? In this paper we will be mainly concerned with this question. At  present it seems to us that none of these properties  implies the other one. Indeed, in Section~\ref{two} we give an example (Example~\ref{example1}) of a binomial edge ideal whose residue class ring has a Koszul filtration,  while  in the given coordinates the ideal has no quadratic Gr\"obner basis for any monomial order. Other examples arise from the work of Ohsugi and Hibi \cite{OH}.  On the other hand, if the Koszul filtration $\MF$ is of a very special nature, namely if $\MF$ is a flag, then the defining ideal of the algebra has a quadratic Gr\"obner basis, see \cite[Theorem 2.4]{CRV}.

For the moment we do not know of any example of a standard graded $K$-algebra which does not admit a Koszul filtration, even though its defining ideal has a quadratic Gr\"obner basis. As a generalization of a lemma of Sturmfels (\cite[Chapter 12]{St}) we prove in Section~\ref{one} the following result (Theorem~\ref{linearquot}): let $I\subset S$ be a  graded  ideal which has a quadratic Gr\"obner basis with respect to the reverse lexicographic order induced by $x_1>\cdots >x_n.$ Then, for all $i,$ the colon ideals $(I,x_{i+1},\ldots,x_n):x_i$ are generated, modulo $I,$ by linear forms. Thus the flag of ideals $0 \subset (\bar{x}_n) \subset (\bar{x}_n,\bar{x}_{n-1})\subset \cdots \subset (\bar{x}_n,\bar{x}_{n-1},\ldots,\bar{x}_1)$ has the potential to belong to a Koszul filtration of $S/I$. Here $\bar{f}$ denotes the residue class of a polynomial $f\in S$ modulo $I$. We call any chain of ideals $(0)=I_0\subset I_1\subset I_2\subset \cdots
\subset I_n=(\bar{x}_1,\ldots,\bar{x}_n)$ generated by linear forms a {\em linear flag} if for all $j$, $I_{j+1}/I_j$ is cyclic and the annihilator of $I_{j+1}/I_j$ is generated by linear forms. Thus Theorem~\ref{linearquot} says that if $I$ has a quadratic Gr\"obner basis with respect to the reverse lexicographic order, then $S/I$ admits a linear flag

In general, even if $I$ is a binomial ideal with quadratic Gr\"obner basis with respect to the reverse lexicographic order,  the colon ideals $(\bar{x}_{i+1},\ldots,\bar{x}_n):\bar{x}_i$ are not generated by subsets of $\{\bar{x}_1,\bar{x}_2,\ldots,\bar{x}_n\}$. However this is the case in various combinatorial contexts, and in particular for toric ideals. This is the content of Theorem~\ref{takayuki} in which we give an algebraic condition for  binomial ideals which ensures that all the colon ideals under consideration are generated by  variables.  There we also show that under the given conditions the colon ideals modulo $I$ and modulo $\ini_<(I)$ are generated by the residue classes of the same sets of variables. This observation makes it much easier to compute these colon ideals and in some cases allows a combinatorial interpretation.

Suppose now that $0 \subset (\bar{x}_n) \subset (\bar{x}_n,\bar{x}_{n-1})\subset \cdots \subset (\bar{x}_n,\bar{x}_{n-1},\ldots,\bar{x}_1)$ is a linear flag. One may ask whether under this assumption $I$ has a  quadratic Gr\"obner basis with respect to the reverse lexicographic order induced by $x_1>\cdots >x_n.$ In general this is not the case. Indeed, let $R_{5,2}=K[x_1,\ldots,x_{10}]/I$ be the $K$-algebra generated by all squarefree monomials $t_it_j\subset K[t_1,\ldots,t_5]$ in 5 variables with $\bar{x}_k=t_{i_k}t_{j_k}$ and such that $k<\ell$ if $t_{i_k}t_{j_k}>t_{i_\ell}t_{j_\ell}$ in the lexicographic order. Then $I$ does not have a quadratic Gr\"obner basis with respect to the reverse lexicographic order induced by $x_1>x_2> \ldots >x_n$. Nevertheless, the sequence $x_n,x_{n-1},\ldots,x_1$ has linear quotients modulo $I$ and hence defines a linear flag. Actually, it is shown in \cite{HQS} that $R_{m,2}$, the algebra generated by all squarefree monomials of degree $2$ in $m$ variables, even has a Koszul filtration.

On the other hand,  in Theorem~\ref{closed} we show that if  $G$ is a finite simple graph on the vertex set $[n]$, and  $J_G\subset K[x_1,\ldots,x_n,y_1,\ldots,y_n]$ is its binomial edge ideal,  then $G$ is closed, i.e. $J_G$ has a quadratic Gr\"obner basis with respect to the  reverse  lexicographic order induced by  $y_1> y_2> \cdots > y_n>x_1> x_2>\cdots > x_n$, if and only if all the colon ideals $(\bar{x}_{i+1},\ldots,\bar{x}_n):\bar{x}_i$ have linear quotients.  In Section~\ref{two} we then show in Theorem~\ref{filtration} that, for a closed graph, the linear flag
$0\subset (\bar{x}_n)\subset \cdots \subset (\bar{x}_n,\bar{x}_{n-1},\ldots,\bar{x}_1)$ can be extended to a Koszul filtration of $S/J_G$. We close this section by proving in Corollary~\ref{hibikoszul} that the family of poset ideals of a finite distributive lattice defines a Koszul filtration of the join-meet ring attached to the lattice. As a consequence one obtains that all the poset ideals generate ideals with  linear resolution in the join-meet ring.

\section{Gröbner bases and linear quotients}
\label{one}

Let $K$ be a field and $S=K[x_1\ldots, x_n]$ the polynomial ring in the variables $x_1,\ldots,x_n$. Any standard graded $K$-algebra $R$ of embedding dimension $n$
is  isomorphic to $S/I$ where $I$ is a graded ideal with $I\subset (x_1,\ldots,x_n)^2$. Let $\mm$ be the graded maximal ideal of $R$. As explained in the introduction, a {\em Koszul filtration} of $R$ is a finite set $\MF$ of ideals  generated by linear forms  such that
\begin{enumerate}
\item[(i)] $\mm\in \MF$;
\item[(ii)] for any $I\in \MF$ with $I\neq 0$, there exists  $J\in \MF$ with $J\subset I$ such that $I/J$ is cyclic and $J:I\in \MF$.
\end{enumerate}

As shown in \cite[Proposition 1.2]{CTV}, any $I\in  \MF$ has a linear resolution and, in particular, $R$ is Koszul if it admits a Koszul filtration.

\medskip
Obviously, if $\MF$ is a Koszul filtration, then $\MF$ contains a flag of ideals
\[
0=I_0\subset I_1\subset I_2\subset \cdots \subset  I_n=\mm,
\]
where $I_j\in \MF$ for all $j$ (and $I_j/I_{j+1}$ is cyclic for all $j$). If it happens that for all $j$ there exists $k$ such that $I_{j+1}:I_j=I_k$, then $\{I_0,I_1,\ldots,I_n\}$ is a Koszul filtration. Such Koszul filtrations are called {\em Koszul flags}. Conca, Rossi and Valla showed  \cite[Theorem 2.4]{CRV} that if $S/I$ has a Koszul flag, then $I$ has a quadratic Gr\"obner basis. The following theorem  is a partial converse of this result.

\begin{Theorem} \label{linearquot}
Let $I\subset S$ be a  graded ideal which has a quadratic Gr\"obner basis with respect to the reverse lexicographic order induced by $x_1>\cdots >x_n.$ Then, for all $i,$ the colon ideals
\[
(I,x_{i+1},\ldots,x_n):x_i
\] are generated, modulo $I,$ by linear forms.
\end{Theorem}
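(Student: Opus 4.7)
The plan is to reduce the statement to the smaller polynomial ring $\bar{S}=K[x_1,\ldots,x_i]=S/(x_{i+1},\ldots,x_n)$ and extract explicit linear-form generators from a quadratic Gr\"obner basis of the image $\bar{I}$ of $I$ in $\bar{S}$, using two features of the reverse lexicographic order. Unwinding the definitions, the claim that $(I,x_{i+1},\ldots,x_n):x_i$ is generated modulo $I$ by linear forms is equivalent to the claim that $\bar{I}:x_i$ is generated modulo $\bar{I}$ by linear forms in $\bar{S}$; lifting such generators back to $S$ and adjoining the variables $x_{i+1},\ldots,x_n$ yields what we want. The crucial point is that $x_i$ has become the \emph{smallest} variable in the induced reverse lex order on $\bar{S}$.

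The first ingredient is the standard revlex fact that, because $x_{i+1},\ldots,x_n$ are the smallest variables, $\mathcal{G}\cup\{x_{i+1},\ldots,x_n\}$ is a Gr\"obner basis of $I+(x_{i+1},\ldots,x_n)$: for $g\in\mathcal{G}$ with $x_j\mid\ini(g)$, $j>i$, the tail of $g$ already lies in $(x_{i+1},\ldots,x_n)$ by revlex, while the remaining S-pairs satisfy Buchberger's coprimality criterion. Consequently $\bar{\mathcal{G}}:=\{\bar{g}:g\in\mathcal{G},\ \ini(g)\notin(x_{i+1},\ldots,x_n)\}$ is a quadratic Gr\"obner basis of $\bar{I}$ in $\bar{S}$ with $\ini(\bar{g})=\ini(g)$. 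The second ingredient is that, in reverse lex with $x_i$ smallest, the leading term of any homogeneous polynomial achieves the \emph{smallest} $x_i$-exponent among its terms. Hence if $g\in\bar{\mathcal{G}}$ has $\ini(g)=x_a x_i$ (the only shape in which $x_i$ can divide a quadratic initial term), every term of $g$ is divisible by $x_i$, so $g=x_i\,\ell_g$ for some linear form $\ell_g\in\bar{S}$ with $\ini(\ell_g)=x_a$, and clearly $\ell_g\in\bar{I}:x_i$.

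Let $L\subset\bar{S}$ be the ideal generated by these linear forms $\ell_g$. Then $\bar{I}+L\subseteq\bar{I}:x_i$ is immediate, and to finish I would compare initial ideals. The same $x_i$-exponent observation gives $\ini(\bar{I}:x_i)=\ini(\bar{I}):x_i$, since $\ini(f)=x_i u$ for $f\in\bar{I}$ forces $x_i\mid f$, whence $f=x_i h$ with $h\in\bar{I}:x_i$ and $\ini(h)=u$. On the monomial side, $\ini(\bar{I}):x_i=\ini(\bar{I})+(x_a:g\in\bar{\mathcal{G}},\ \ini(g)=x_a x_i)$, which coincides with $\ini(\bar{I}+L)$ since $\ini(\ell_g)=x_a$. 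Thus $\bar{I}:x_i$ and $\bar{I}+L$ are two ideals with the same initial ideal, one contained in the other, and so they are equal. The main obstacle is keeping the two reverse-lex preservation claims (that $\bar{\mathcal{G}}$ is a Gr\"obner basis of $\bar{I}$ and that $\ini(\bar{I}:x_i)=\ini(\bar{I}):x_i$) honest; both boil down to the same $x_i$-exponent characterization of the revlex leading term, which I would verify carefully once.
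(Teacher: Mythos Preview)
Your proof is correct and follows essentially the same route as the paper's. Both arguments first pass to $\bar S=K[x_1,\ldots,x_i]$ using the standard revlex fact that $\ini_<(I,x_{i+1},\ldots,x_n)=(\ini_<(I),x_{i+1},\ldots,x_n)$, so that the image $\bar I$ again has a quadratic Gr\"obner basis with $x_i$ now the smallest variable; and both then exploit the characteristic revlex property that $x_i$ divides a homogeneous polynomial if and only if it divides its initial term. The only cosmetic difference is in the second step: the paper quotes Sturmfels' lemma (the set $\{f\in\mathcal G:x_i\nmid f\}\cup\{f/x_i:f\in\mathcal G,\ x_i\mid f\}$ is a Gr\"obner basis of $\bar I:x_i$), from which the linear-form generators $f/x_i$ are read off directly, whereas you extract the same linear forms $\ell_g$ and then verify $\bar I+L=\bar I:x_i$ by the initial-ideal comparison $\ini_<(\bar I:x_i)=\ini_<(\bar I):x_i$. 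Your argument is thus a self-contained reproof of the quadratic case of Sturmfels' lemma rather than a citation of it; the underlying idea is identical.
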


For the proof of the theorem we need to recall the following result from \cite[Chapter 12]{St}.

\begin{Lemma}\label{Sturmfels}
Let $\MG$ be the reduced Gr\"obner basis of the graded ideal $I\subset S$ with respect to the reverse lexicographic order
induced by $x_1>\cdots >x_n.$ Then
\[
\MG^\prime=\{f\in \MG: x_n\not | f\}\cup \{f/x_n: f\in \MG \text { and }x_n | f\}
\]
is a Gr\"obner basis of $I:x_n.$
\end{Lemma}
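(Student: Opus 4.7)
The plan is to verify the two defining conditions of a Gr\"obner basis of $I:x_n$, namely (a) the inclusion $\MG' \subset I:x_n$, and (b) that $\ini_<(I:x_n)$ is contained in the monomial ideal generated by $\{\ini_<(g):g\in\MG'\}$. Part (a) I expect to dispatch in one line: any $f\in\MG$ with $x_n\nmid f$ already lies in $I\subset I:x_n$, and any $f\in\MG$ with $x_n\mid f$ satisfies $x_n(f/x_n)=f\in I$, so $f/x_n\in I:x_n$.

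The real content lies in an auxiliary fact specific to the reverse lexicographic order induced by $x_1>\cdots>x_n$, which I would establish first: for every nonzero homogeneous $f\in S$, one has $x_n\mid f$ if and only if $x_n\mid \ini_<(f)$. The forward direction is obvious; for the converse, among monomials of a fixed total degree, revlex ranks monomials with smaller $x_n$-exponent higher, so if even the leading monomial of $f$ is divisible by $x_n$ then every monomial of $f$ (all of the same total degree) must be. Two consequences of this observation are what I actually need: the operation $f\mapsto f/x_n$ on elements of $\MG$ divisible by $x_n$ is genuine polynomial division, and it commutes with taking initial terms in the sense that $\ini_<(f/x_n)=\ini_<(f)/x_n$.

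With this in hand, for (b) I take an arbitrary nonzero homogeneous $h\in I:x_n$, so $x_nh\in I$, and select some $f\in\MG$ whose initial monomial divides $\ini_<(x_nh)=x_n\,\ini_<(h)$. If $x_n\nmid f$ then $x_n\nmid \ini_<(f)$, so cancelling $x_n$ yields $\ini_<(f)\mid\ini_<(h)$ with $f\in\MG'$. Otherwise $x_n\mid f$, in which case $g=f/x_n\in\MG'$ and the divisibility $x_n\ini_<(g)\mid x_n\ini_<(h)$ again gives $\ini_<(g)\mid\ini_<(h)$. Either way $\ini_<(h)$ is divisible by the initial monomial of an element of $\MG'$, which is exactly what is needed.

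The only step I anticipate as nontrivial is the revlex characterization of divisibility by $x_n$ on the level of polynomials versus initial terms; once that is isolated, the rest of the argument is a bookkeeping pass through the two cases in part~(b). Note that no homogeneity issue obstructs the argument since $I$ is assumed graded, so all elements of $\MG$ and of $I:x_n$ may be taken homogeneous.
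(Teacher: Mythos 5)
Your proof is correct and complete: the revlex observation that for a homogeneous $f$ one has $x_n\mid f$ if and only if $x_n\mid\ini_<(f)$ (together with its corollary $\ini_<(f/x_n)=\ini_<(f)/x_n$) is exactly the key point, and the two-case divisibility check on $\ini_<(x_nh)=x_n\ini_<(h)$ for homogeneous $h\in I:x_n$ finishes the job; your remark that homogeneity of $I$ is what legitimizes restricting to homogeneous elements is the one subtlety worth flagging, and you flagged it. The paper itself gives no proof of this lemma --- it simply cites Sturmfels --- and your argument is the standard one found in that source, so there is nothing to contrast.
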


\begin{proof}[Proof of Theorem~\ref{linearquot}] Let $\MG=\{g_1,\ldots,g_m\}$ be the reduced Gr\"obner  basis of $I$ with respect to the reverse lexicographic order and fix $i\leq n.$ Let $f_j=g_j \mod (x_{i+1},\ldots,x_n),$ $f_j\in K[x_1,\ldots,x_i]$ for all $j.$ We may assume that $\ini_<(g_1)>\cdots> \ini_<(g_m)$ and,
 therefore, that there exists an $s\leq m$ such that $f_s\neq 0$ and $f_{s+1}=\cdots =f_m=0.$ In addition, we have
$\ini_<(f_j)=\ini_<(g_j)$ for $1\leq j\leq s.$ It then follows that  $(I,x_{i+1},\ldots,x_n)=(f_1,\ldots,f_s,x_{i+1},\ldots,x_n)$ and
the set $\MF=\{f_1,\ldots,f_s,x_{i+1},\ldots,x_n\}$ is a Gr\"obner basis since by \cite[Lemma 4.3.7]{HH} $$\ini_<(I,x_{i+1},\ldots,x_n)=(\ini_<(I),x_{i+1},\ldots,x_n).$$ Moreover, $\MF$ is reduced since $\MG$ is reduced. Let $J=(f_1,\ldots,f_s).$  Then
\[
(I,x_{i+1},\ldots,x_n):x_i=(J,x_{i+1},\ldots,x_n):x_i=(J:x_i)+(x_{i+1},\ldots,x_n).
\]
By applying Lemma~\ref{Sturmfels} for $J\cap K[x_1,\ldots, x_i],$ it follows that, modulo $J,$ $(J:x_i)$ is generated by linear forms in $K[x_1,\ldots,x_i]$ which implies that $(I,x_{i+1},\ldots,x_n)$ is also generated by linear forms modulo $I.$
\end{proof}

Consider the ideal $I$ which is  generated by the binomial  $x_1x_3-x_2x_3$.  Then $I:x_3=(I,x_1-x_2)=(x_1-x_2)$. Thus, in general, one can not expect that under the assumptions of Theorem~\ref{linearquot} the ideals $(I,x_{i+1},\ldots,x_n):x_i$ are  generated  by a subset of the variables modulo $I$, even when $I$ is a binomial ideal. Therefore some additional assumptions on the Gr\"obner basis of $I$ are  required to have monomial colon ideals.

For a graded ideal $J\subset S=K[x_1,\ldots,x_n]$ we denote by $J_j$ the $j$th graded component of $J$.

\begin{Theorem}
\label{takayuki}
Let $I\subset S=K[x_1,\ldots,x_n]$ be an ideal generated by quadratic binomials,  and let $<$ be the reverse lexicographic order induced by $x_1>x_2>\cdots >x_n$. Let $f_1,\ldots, f_m$ be the degree $2$ binomials of the reduced Gr\"obner basis of $I$ with respect to $<$. Let  $f_i=u_i-v_i$ for $i=1,\ldots,m$, and assume that $\gcd(u_i,v_i)=1$ for all $i$. Then, for all $i,$ we have:
\begin{enumerate}
\item[(a)] $[(I,x_{i+1}, \ldots,x_n):x_i]_1=[(\ini_<(I),x_{i+1}, \ldots,x_n):x_i]_1$;
\item[(b)] Suppose $I$ has a quadratic Gr\"öbner basis with respect to $<$.
Then
\[
(I,x_{i+1}, \ldots,x_n):x_i=(I,x_{i+1}, \ldots,x_n, \{x_j\:\; j\leq i,\, x_jx_i\in \ini_<(I)\}),
\]
and
\[
(\ini_<(I),x_{i+1}, \ldots,x_n):x_i= (\ini_<(I),x_{i+1}, \ldots,x_n, \{x_j\:\; j\leq i,\, x_jx_i\in \ini_<(I)\}).
\]
\end{enumerate}

\end{Theorem}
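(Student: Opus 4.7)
The strategy is to reduce both sides of (a) to a comparison in the quotient ring $\bar S = S/(x_{i+1},\ldots,x_n) = K[x_1,\ldots,x_i]$ and then exploit the reverse-lex/gcd hypothesis to pin down the shape of the quadratic Gr\"obner basis elements after reduction. Writing $L=(x_{i+1},\ldots,x_n)$, the two sides of (a) automatically contain $L_1$, so it suffices to compare linear forms $\bar\ell\in\bar S_1$ with $\bar\ell\,\bar x_i\in \bar I$ against those with $\bar\ell\,\bar x_i\in \ini_<(\bar I)$. By the identity $\ini_<(I,L)=(\ini_<(I),L)$ recalled in the proof of Theorem~\ref{linearquot}, $\ini_<(\bar I)$ coincides with the image of $\ini_<(I)$, and a reduced Gr\"obner basis of $\bar I$ is obtained from that of $I$ by reduction modulo $L$.

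The combinatorial core, which I would establish first, is that $u_k>v_k$ in reverse lex combined with $\gcd(u_k,v_k)=1$ forces the largest-index variable appearing in $u_k$ to have strictly smaller index than the largest-index variable appearing in $v_k$. Reducing modulo $L$ then places each quadratic Gr\"obner basis element $f_k=u_k-v_k$ into one of three cases: (A) both $u_k,v_k\in\bar S$ and $\tilde f_k=u_k-v_k$, in which case $u_k$ does not involve $x_i$; (B) $u_k\in\bar S$ but $v_k\notin\bar S$, so $\tilde f_k=u_k$ becomes a bare monomial; or (C) both $u_k,v_k\notin\bar S$, so $\tilde f_k=0$. The decisive consequence is that any quadratic leading monomial divisible by $x_i$ must come from case (B), and hence already belongs to $\bar I$ as a single monomial.

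For the nontrivial direction of (a), I would write $\bar\ell\,\bar x_i=\sum_k a_k\tilde f_k$ and split the identity in $\bar S_2$ into its $x_i$-divisible and non-$x_i$-divisible parts. The non-$x_i$-divisible part is a linear dependence among the distinct leading monomials $u_k$ (from case (A) and from those case (B) elements with $u_k$ not divisible by $x_i$) and the tail monomials $v_k$ from case (A); reducedness of the Gr\"obner basis guarantees that no $v_j$ equals any $u_k$, so this dependence forces $a_k=0$ whenever $\tilde f_k$ is not of the form $x_ax_i$. The surviving $x_i$-divisible part then expresses $\bar\ell$ as a $K$-combination of the variables $x_a$ with $x_ax_i\in\ini_<(\bar I)$. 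The reverse inclusion is immediate: since $\ini_<(\bar I)$ is monomial, each support monomial $x_jx_i$ of $\bar\ell\,\bar x_i$ must equal some leading monomial $u_k$, necessarily from case (B), and thus already lies in $\bar I$.

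Part (b) then follows by combining (a) with Theorem~\ref{linearquot}. Under the quadratic Gr\"obner basis hypothesis, Theorem~\ref{linearquot} asserts that $(I,L):x_i$ is generated modulo $I$ by linear forms, which (a) identifies precisely with those spanning $L_1$ together with $\{x_j\colon j\le i,\,x_jx_i\in\ini_<(I)\}$. The containment in the other direction reduces to checking that $x_a\cdot x_i\in(I,L)$ whenever $x_ax_i\in\ini_<(I)$; writing $x_ax_i=u_k$, the revlex/gcd analysis forces $v_k$ to involve some variable $x_r$ with $r>i$, so $v_k\in L$ and $x_ax_i=f_k+v_k\in I+L$. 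The monomial version $(\ini_<(I),L):x_i=(\ini_<(I),L,\{x_j\colon x_jx_i\in\ini_<(I)\})$ is a direct colon computation with monomial generators. I expect the main obstacle to be the coefficient tracking in the central step of (a), where careful use of reducedness of the Gr\"obner basis is needed to force the unwanted $a_k$ to vanish.
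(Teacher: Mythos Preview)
Your argument is correct. Both proofs rest on the same combinatorial observation---that $u_k>v_k$ in reverse lex together with $\gcd(u_k,v_k)=1$ forces the largest-index variable of $v_k$ to have strictly larger index than that of $u_k$---but they deploy it differently.

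For part (a), the paper argues by induction on $\ini_<(\ell)$: it looks at the leading variable $x_j$ of $\ell$, shows $x_jx_i\in\ini_<(I)$, uses the gcd hypothesis to conclude $x_jx_i\in(I,x_{i+1},\ldots,x_n)$, and peels $x_j$ off $\ell$. You instead pass to $\bar S=K[x_1,\ldots,x_i]$, write $\bar\ell\,\bar x_i=\sum_k a_k\tilde f_k$ as a single $K$-linear combination in $\bar I_2$, and kill all but the case-(B) terms with $u_k$ divisible by $x_i$ by reading off the non-$x_i$-divisible component. Your version is a one-shot linear-algebra argument; the paper's is an inductive reduction. Both are clean; the paper's avoids the bookkeeping with the three cases. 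One small imprecision: in case (A) the tail $v_k$ may itself be $x_i$-divisible, so it need not appear in the ``non-$x_i$-divisible part''---but this does not matter, since the corresponding $u_k$ still does, and reducedness still forces $a_k=0$.

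For part (b) the two routes diverge more substantially. The paper gives a self-contained minimal-counterexample argument: it assumes some $f\in(I,L):x_i$ has no monomial in $J'_i=(I,L,\{x_j:x_jx_i\in\ini_<(I)\})$, chooses such an $f$ with smallest initial monomial, and derives a contradiction by rewriting. You instead invoke Theorem~\ref{linearquot} to conclude that $(I,L):x_i$ is generated modulo $I$ by its degree-one part, and then apply (a) to identify that degree-one part as the span of $L_1\cup\{x_j:j\le i,\ x_jx_i\in\ini_<(I)\}$. Your route is shorter and makes the logical dependence on Theorem~\ref{linearquot} explicit; the paper's route is independent of that theorem and in effect reproves a sharpened form of it in this special situation.
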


\begin{proof}
(a) Let $\ell=\sum_{k=1}^na_kx_k$ be a linear form with $\ell x_i\in (I,x_{i+1}, \ldots,x_n)$. We may assume that $a_k=0$ for $k>i$.
Let $x_j=\ini_<(\ell)$. Then $j\leq i$ and $x_jx_i\in \ini_<(I,x_{i+1}, \ldots,x_n)=(\ini_<(I),x_{i+1}, \ldots,x_n)$. Therefore, there exists $f_k$ with $\ini_<(f_k)=x_jx_i$. Thus, if $f_k=x_jx_i-x_rx_s$, then $s\geq i$. However, since $\gcd(u_k,v_k)=1,$ we see that $s>i$. This implies that $x_jx_i\in (I,x_{i+1},\ldots,x_n)$ and, consequently, $(\ell-a_jx_j)x_i\in  (I,x_{i+1}, \ldots,x_n)$. Since $x_j\in (\ini_<(I),x_{i+1}, \ldots,x_n):x_i$, induction on $\ini_<(\ell)$ shows that $\ell\in (\ini_<(I),x_{i+1}, \ldots,x_n):x_i$.

Conversely, suppose $\ell\in (\ini_<(I),x_{i+1}, \ldots,x_n):x_i$. Since $(\ini_<(I),x_{i+1}, \ldots,x_n)$ is a monomial ideal, we may assume that $\ell$ is a monomial and $\ell\not\in  (\ini_<(I),x_{i+1}, \ldots,x_n)$, say, $\ell=x_j$. Then $j\leq i$ and $x_jx_i\in  (\ini_<(I),x_{i+1}, \ldots,x_n)$. Then, as before, there exists $f_k=x_jx_i-x_rx_s$ with $r\leq s$ and $s>i$. It follows that $x_jx_i\in (I,x_{i+1},\ldots,x_n)$. and hence $x_j\in (I,x_{i+1}, \ldots,x_n):x_i$.

(b) Suppose that ${\mathcal G} = \{f_{1}, \ldots, f_{m}\}$
is the reduced Gr\"obner basis of $I$
with respect to $<$.
% In the first step, we prove that the colon ideal
Let
$J_{i} = (I,x_{i+1}, \ldots,x_n):x_i$
% is generated by linear forms.  Let
and
\[
J'_{i} = (I,x_{i+1}, \ldots, x_n, \{x_j\:\; j\leq i,\, x_jx_i\in \ini_<(I)\}).
\]
One has $J'_{i} \subset J_{i}$.  To see why this is true, suppose that
$x_jx_i\in \ini_<(I)$ with $j \leq i$.
Then there is $f_{k} = x_{j}x_{i} - x_{p}x_{q} \in {\mathcal G}$ with
$\ini_{<}(f) = x_{j}x_{i}$.
Since $j \leq i$, it follows that either $p > i$ or $q > i$.
Hence $x_{p}x_{q} \in (I,x_{i+1}, \ldots,x_n)$.
Thus $x_{j}x_{i} \in (I,x_{i+1}, \ldots,x_n)$
and $x_{j} \in J_{i}$, as required.

Now, let ${\mathcal A}$ denote the set of
homogeneous polynomials $f \in S$ of degree $\geq 1$ which belongs to
$J_{i}$ with the property
that none of the monomials appearing in $f$ belongs to $J'_{i}$.
Suppose that ${\mathcal A} \neq \emptyset$.
Among the polynomials belonging to ${\mathcal A}$,
we choose $f \in {\mathcal A}$ such that
$\ini_{<}(f) \leq \ini_{<}(g)$ for all $g \in {\mathcal A}$.
Let $u = \ini_{<}(f)$.  Since $x_{i}f \in (I,x_{i+1}, \ldots,x_n)$,
one has $x_{i} u \in (\ini_{<}(I),x_{i+1}, \ldots,x_n)$.
Since $u \not\in J'_{i}$, it follows that $x_{i} u \in \ini_{<}(I)$.
Thus there is $f_{\ell} = x_{p}x_{q} - x_{r}x_{s}$ with $\ini_{<}(f) = x_{p}x_{q}$
such that $x_{p}x_{q}$ divides $x_{i} u$.
If, say, $p = i$, then $x_{q}$ divides $u$.
Thus $q \leq i$.  Since $x_{i}x_{q} \in \ini_{<}(I)$,
one has $x_{q} \in J'_{i}$.  This contradict $u \not\in J'_{i}$.
Thus $p \neq i$, $q \neq i$ and
$x_{p}x_{q}$ divides $u$.  Let $w = (u/x_{p}x_{q})x_{r}x_{s}$
and $f' = f - a(u - w)$, where $a \neq 0$ is the coefficient of $u$ in $f$.
Since $u - w \in I$, one has $f' \in J_{i}$.
Since $u \not\in J'_{i}$, one has $w \not\in J'_{i}$.
Thus $f' \in {\mathcal A}$ and
$\ini_{<}(f') < \ini_{<}(f)$.
This contradicts the choice of $f \in {\mathcal A}$.
Hence ${\mathcal A} = \emptyset$ and $J_{i} = J'_{i}$, as desired.

The proof of the corresponding statement for $\ini_<(I)$ is obvious.
\end{proof}

In \cite{C} a standard graded $K$-algebra $R$ is a called {\em universally Koszul}, if the set  consisting of all ideals generated  by linear forms is a Koszul filtration of $R$. In combinatorial contexts it is natural to consider  standard graded $K$-algebras $R$ whose   set of ideals consisting of all ideals which are generated by subsets of the variables is a Koszul filtration of $R$. We call such algebras {\em c-universally Koszul}.  It is clear that any
universally Koszul algebra or any strongly Koszul algebra is c-universally Koszul.

\begin{Corollary}
\label{toric}
Let $I\subset S=K[x_1,\ldots,x_n]$ be a toric ideal with the property that  $I$ has a quadratic Gr\"obner basis with respect to the reverse lexicographic order  induced by any given order of the variables. Then  $S/I$ is c-universally Koszul.
\end{Corollary}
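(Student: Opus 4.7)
The plan is to show directly that the family $\MF$ consisting of all ideals of $R = S/I$ generated by subsets of $\{\bar{x}_1, \ldots, \bar{x}_n\}$ is a Koszul filtration of $R$. Condition (i) is trivial, since $\mm = (\bar{x}_1, \ldots, \bar{x}_n) \in \MF$. For condition (ii), given a nonzero ideal $L = (\bar{x}_{j_1}, \ldots, \bar{x}_{j_k}) \in \MF$, I would set $L' = (\bar{x}_{j_1}, \ldots, \bar{x}_{j_{k-1}})$ (taking $L' = 0$ when $k=1$). Then $L' \in \MF$, $L/L'$ is the cyclic module generated by $\bar{x}_{j_k}$, and $L' : L = L' : \bar{x}_{j_k}$. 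It therefore suffices to show that this colon ideal is generated, modulo $I$, by a subset of the variables.

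The key step exploits the freedom in the hypothesis: I reorder the variables so that $x_{j_k}$ is placed at position $i := n-k+1$ and $x_{j_1}, \ldots, x_{j_{k-1}}$ occupy, in some order, positions $i+1, \ldots, n$. By assumption $I$ still has a quadratic Gr\"obner basis with respect to the reverse lexicographic order induced by this new ordering. Moreover, since $I$ is toric, the elements of any reduced Gr\"obner basis of $I$ are binomials $u_i - v_i$ whose supports are disjoint, so in particular $\gcd(u_i,v_i)=1$. Thus the hypotheses of Theorem~\ref{takayuki} are in force for the new ordering.

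Applying Theorem~\ref{takayuki}(b) to the reindexed setting gives
\[
(I, x_{i+1}, \ldots, x_n) : x_i \;=\; (I, x_{i+1}, \ldots, x_n, \{x_j : j \leq i,\ x_j x_i \in \ini_<(I)\}).
\]
Passing to the quotient $R$, the left-hand side is precisely $L' : \bar{x}_{j_k}$, and the right-hand side is manifestly generated by a subset of $\{\bar{x}_1, \ldots, \bar{x}_n\}$. Hence $L' : L \in \MF$, completing the verification of (ii).

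The only real obstacle is bookkeeping: one must be careful that the freedom to choose "any given order of the variables" in the hypothesis is invoked at the right moment, placing $x_{j_k}$ and the generators of $L'$ into the last $k$ slots so that Theorem~\ref{takayuki}(b) applies verbatim. Once the relabeling is made, everything reduces to an application of that theorem combined with the standard fact that reduced Gr\"obner bases of toric ideals consist of binomials with coprime terms.
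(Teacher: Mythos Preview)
Your proof is correct and follows essentially the same approach as the paper: both verify the $\gcd$ hypothesis of Theorem~\ref{takayuki} using primeness of the toric ideal, and then invoke that theorem. The paper's proof is a two-line remark (``the binomials in a minimal set of binomial generators of a toric ideal are all irreducible, since $I$ is a prime ideal; hence the conclusion follows immediately from Theorem~\ref{takayuki}''), while you have spelled out explicitly the reordering step that places the chosen subset of variables into the last slots so that Theorem~\ref{takayuki}(b) applies---this is exactly the detail the paper leaves to the reader.
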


\begin{proof} The binomials in a minimal set of binomial generators of a torc ideal are all irreducible, since $I$ is a prime ideal. Hence, the conclusion follows immediately from Theorem~\ref{takayuki}.
\end{proof}

\begin{Remark}\label{not} {\em In view of Theorem~\ref{linearquot} and Theorem~\ref{takayuki} one may expect that the following more general statement may be true: let $I\subset S=K[x_1,\ldots,x_n]$ be a graded  ideal. Then the following are equivalent:  (a) $I$ has a quadratic Gr\"obner basis with respect to the reverse lexicographic order induced by $x_1>x_2>\cdots >x_n$, (b) $x_n,x_{n-1},\ldots,x_1$ has linear quotients modulo $I$. In general however, (b) does not imply (a). Indeed, let $R_{5,2}=K[x_1,\ldots,x_{10}]/I$ the $K$-algebra generated by all squarefree monomials $t_it_j\subset K[t_1,\ldots,t_5]$ in $5$ variables with $\bar{x}_k=t_{i_k}t_{j_k}$ and such that $k<\ell$ if $t_{i_k}t_{j_k}>t_{i_\ell}t_{j_\ell}$ in the lexicographic order. Then $I$ does not have a quadratic Gr\"obner basis with respect to the reverse lexicographic order induced by $x_1>x_2> \ldots >x_n$. Nevertheless, the sequence $x_n,x_{n-1},\ldots,x_1$ has linear quotients modulo $I$. }
\end{Remark}

\medskip
Surprisingly,  for any binomial edge ideal, the conditions (a) and (b) as formulated in Remark~\ref{not} turn out to be equivalent, as will be shown in the next theorem.

Let $G$ be a finite simple graph on the vertex set $[n]$.  The {\em binomial edge ideal} $J_G$ associated with $G$ is the ideal generated by the quadrics $f_{ij}= x_iy_j-x_jy_i$ in $S=K[x_1,\ldots,x_n, y_1,\ldots,y_n]$  with $\{i,j\}$ an edge of $G$. This class of ideals was introduced in \cite{HHHKR} and \cite{Oht}.

The graph  $G$  is called {\em closed} with respect to the given labeling if $G$  satisfies the following condition: whenever $\{i, j\}$ and $\{i, k\}$
are edges of $G$  and either $i < j$,  $i < k$  or $i > j$,  $i > k$ then $\{j,k\}$  is also an edge of $G$. It is shown in
\cite[Theorem 1.1]{HHHKR} that $G$ is closed with respect to the given labeling if and only if $J_G$ has a quadratic Gr\"
obner basis with respect to the lexicographic order induced by $x_1>\cdots>x_n>y_1>\cdots >y_n$. It is easily seen that any
binomial ideal $J_G$ has the same reduced Gr\"obner basis with respect to the lexicographic order induced by the natural
order of the variables and with respect to the reverse lexicographic order induced by $y_1>\cdots >y_n>x_1>\cdots >x_n.$
Therefore, $G$ is closed with respect to the given labeling if and only if $J_G$ has a quadratic Gr\"obner basis with
respect to the reverse lexicographic order induced by $y_1>\cdots>y_n>x_1>\cdots >x_n$.

One calls a graph $G$ closed if it is closed with respect to some labeling of its vertices.  D.A. Cox and A. Erskine \cite[Theorem 1.4]{CE} showed that a connected graph $G$ is closed if and only if $G$ is chordal, claw-free and narrow.

\medskip
We will often use the following notation. For $k\in [n]$, we let
\[
N^{<}(k)=\{j: j<k,\ \{j,k\}\in E(G)\} \text{ and } N^{>}(k)=\{j: j>k,\ \{k,j\} \in E(G)\}.
\]

For some of the following proofs it will be useful to note that, provided that they are nonempty,  each of these sets are intervals if the graph $G$ is closed with respect to its labeling. Indeed, let us take $i\in N^{<}(k)$. In particular, we have $\{i,k\}\in E(G).$ Then,
as all the maximal cliques of $G$ are intervals (see \cite[Theorem 2.2]{EHH1}), it follows that for any $i\leq j< k,$ $\{j,k\}\in E(G)$, thus
$j\in N^{<}(k).$ A similar argument works for $N^{>}(k).$

\begin{Theorem}
\label{closed}
Let $G$ be a connected finite simple graph on the vertex set $[n]$. The following conditions are equivalent:
\begin{enumerate}
\item[(a)] $G$ is closed with respect to the given labeling;
\item[(b)]  the sequence $x_n,x_{n-1},\ldots, x_1$ has linear quotients modulo $J_G$.
\end{enumerate}
\end{Theorem}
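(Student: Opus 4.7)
The plan is to prove (a) $\Rightarrow$ (b) by combining the Gr\"obner basis characterization of closed graphs with Theorem~\ref{takayuki}, and to prove (b) $\Rightarrow$ (a) contrapositively by producing multi-graded obstructions.

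For (a) $\Rightarrow$ (b), if $G$ is closed then the reduced Gr\"obner basis of $J_G$ with respect to the reverse lex order induced by $y_1>\cdots>y_n>x_1>\cdots>x_n$ is $\{f_{ij}=x_iy_j-x_jy_i:i<j,\ \{i,j\}\in E(G)\}$, with initial monomials $x_iy_j$. Since $\gcd(x_iy_j,x_jy_i)=1$, Theorem~\ref{takayuki}(b) applies and yields
\[
(J_G,x_{i+1},\ldots,x_n):x_i \;=\; (J_G,\,x_{i+1},\ldots,x_n,\;\{z: z\cdot x_i\in \ini_<(J_G)\}).
\]
Each initial monomial is a product of one $x$-variable and one $y$-variable, so the only admissible $z$ smaller than or equal to $x_i$ are the variables $y_s$ with $s>i$ and $\{i,s\}\in E(G)$, i.e.\ $s\in N^>(i)$. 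The colon is therefore generated modulo $J_G$ by variables, giving the desired linear quotients.

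For (b) $\Rightarrow$ (a), assume $G$ is not closed. Then there exist $v,b,c\in[n]$ with $\{v,b\},\{v,c\}\in E(G)$ and $\{b,c\}\notin E(G)$; after a swap I may assume $b<c$, and either $v<b$ or $c<v$. The key device is the $\ZZ^n$-multi-grading on $S$ defined by $\deg x_i=\deg y_i=e_i$; each $f_{ij}$ is multi-homogeneous of multi-degree $e_i+e_j$, so $R=S/J_G$ inherits the multi-grading, and a graded ideal is generated by linear forms iff each of its multi-graded components is. An auxiliary claim identifies the linear-form part of the colon $L_\ell:=(\bar x_{\ell+1},\ldots,\bar x_n):_R\bar x_\ell$ as the $K$-span of $\bar x_{\ell+1},\ldots,\bar x_n$ together with the $\bar y_s$ for $s\in N^>(\ell)$. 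This is proved by decomposing a prospective linear form $\bar\ell'$ by its multi-degree $e_s$: the ideal component $(\bar x_{\ell+1},\ldots,\bar x_n)_{e_\ell+e_s}$ is nonzero only when $s>\ell$, in which case it equals $\bar x_s\cdot\mathrm{span}(\bar x_\ell,\bar y_\ell)$, and matching $\bar x_\ell\bar\ell'$ against this forces exactly the claimed generators.

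I then exhibit the obstructions. If $v<b<c$, look at $L_b$: using $f_{vb},f_{vc}$ one gets $\bar x_b\cdot\bar y_v\bar y_c=\bar x_c\bar y_v\bar y_b\in(\bar x_c)\subseteq(\bar x_{b+1},\ldots,\bar x_n)$, so $\bar y_v\bar y_c\in L_b$; in multi-degree $e_v+e_c$ the ideal $(L_b\cap R_1)R$ is spanned by $\bar x_v\bar x_c$ and $\bar x_v\bar y_c$ (only $\bar\ell'=\bar x_c$ contributes, since $\{b,c\}\notin E$ rules out $\bar y_c$ and $v<b$ rules out any generator of multi-degree $e_v$), while $R_{e_v+e_c}$ has basis $\bar x_v\bar x_c,\bar x_v\bar y_c,\bar y_v\bar y_c$, so $\bar y_v\bar y_c\notin(L_b\cap R_1)R$. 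If $b<c<v$, look at $L_v$: from $f_{bv},f_{cv}$ one gets $\bar x_v\bar f_{bc}=0$ in $R$ with $\bar f_{bc}:=\bar x_b\bar y_c-\bar x_c\bar y_b$, so $\bar f_{bc}\in L_v$; the linear-form generators of $L_v$ live in multi-degrees $e_s$ with $s>v>c>b$, hence $(L_v\cap R_1)R$ vanishes in multi-degree $e_b+e_c$, while $\bar f_{bc}\ne0$ there since $\{b,c\}\notin E$ prevents identifying $\bar x_b\bar y_c$ with $\bar x_c\bar y_b$. Either way linear quotients fail, contradicting (b). The main obstacle is the auxiliary subclaim pinning down $L_\ell\cap R_1$; once that is in place, the rest of the argument is routine monomial bookkeeping in finite-dimensional multi-graded pieces of $R$.
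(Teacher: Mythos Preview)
Your proof is correct and follows essentially the same route as the paper: the implication (a)$\Rightarrow$(b) via Theorem~\ref{takayuki}(b) is identical, and for (b)$\Rightarrow$(a) you argue contrapositively with the same $\ZZ^n$-multigrading, the same two cases, and the same obstruction elements ($\bar y_v\bar y_c$ in the case $v<b<c$, and $\bar f_{bc}$ in the case $b<c<v$). The only tactical difference is that you first isolate the linear part $L_\ell\cap R_1$ as an auxiliary claim and then finish by a direct dimension count in the relevant multi-degree of $R$, whereas the paper handles each case by testing membership of the candidate linear factors against $\ini_<(J_G)$; both arguments are equally short and yield the same contradiction.
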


\begin{proof}
(a)\implies (b): %Let $\Delta(G)$ be the clique complex of $G$. Since $G$ is closed, there exists a labeling of the vertices and an order of the facets $F_1,\ldots,F_r$ such that $F_i=[a_i,b_i]$ is an interval  with $a_{i+1}\leq b_i$ for all $i$, see \cite[Theorem 2.2]{EHH}.
Let $G$ be closed with respect to the given labeling. It follows that the generators of $J_G$ form the reduced Gr\"obner basis of $J_G$
 with respect to the reverse lexicographic order induced by $y_1>\cdots >y_n>x_1>\cdots >x_n.$ Let $i\leq n.$ The generators of $\ini_<(J_G)$ which are divisible by $x_i$ are exactly $x_iy_j$ where $i<j$ and $\{i,j\}\in E(G).$
Hence, by using Theorem~\ref{takayuki} (b), we get
\begin{eqnarray}
\label{quotients}
(\bar{x}_n,\bar{x}_{n-1},\ldots,\bar{x}_{i+1}):\bar{x}_i=(\bar{x}_n,\bar{x}_{n-1},\ldots,\bar{x}_{i+1}, \{ \bar{y}_{j}\:\; j\in N^{>}(i)\}).
\end{eqnarray}
Here $\bar{f}$ denotes the residue class for a polynomial $f\in S$ modulo $J_G$.

(b)\implies (a): We may suppose that $\bar{x}_n,\bar{x}_{n-1},\ldots,\bar{x}_1$ has linear quotients and show that $G$ is closed with respect to the given labeling. In fact, assume that $G$ is not closed. Then there exist $\{i,j\}, \{i,k\}\in E(G)$ with $i<j<k$ or $i>j>k$ and such that $\{j,k\}\not\in E(G)$.

Let us first consider the case that $i<j<k$. Since
\[
\bar{x}_j\bar{y}_i\bar{y}_k=\bar{x}_i\bar{y}_j\bar{y}_k=\bar{x}_k\bar{y}_i\bar{y}_j,
\]
 we see that  $\bar{y}_i\bar{y}_k\in (\bar{x}_n,\ldots,\bar{x}_{j+1}):\bar{x}_j$.

 We claim that $\bar{y}_i\bar{y}_k$ is a minimal generator of $(\bar{x}_n,\ldots,\bar{x}_{j+1}):\bar{x}_j$, contradicting the assumption that  $\bar{x}_n,\bar{x}_{n-1},\ldots,\bar{x}_1$ has linear quotients. Indeed, suppose that $\bar{y}_i\bar{y}_k$ is not   a minimal generator of $(\bar{x}_n,\ldots,\bar{x}_{j+1}):\bar{x}_j$, then there exist linear forms $\ell_1$ and $\ell_2$ in $S$  such that $\bar{\ell}_1\bar{\ell}_2=\bar{y}_i\bar{y}_k$ and at least one of the forms $\bar{\ell}_1,\bar{\ell}_2$ belongs to   $(\bar{x}_n,\ldots,\bar{x}_{j+1}):\bar{x}_j$.

 Now we observe that $J_G$ is $\ZZ^n$-graded with $\deg x_i=\deg y_i=\epsilon_i$ for all $i$, where $\epsilon_i$ is the $i$th canonical unit vector of $\ZZ^n$.  It follows that the $\bar{\ell}_i$ are  multi-homogeneous as well with $\deg \bar{\ell}_1\bar{\ell}_2 =\epsilon_i+\epsilon_k$, say  $\deg \bar{\ell}_1=\epsilon_i$ and $\deg \bar{\ell}_2=\epsilon_k$. Thus $\ell_1=ax_i+by_i$ and $\ell_2=cx_k+dy_k$ with $a,b,c,d\in K$. Let us first assume that $\bar{\ell}_1\in (\bar{x}_n,\ldots,\bar{x}_{j+1}):\bar{x}_j.$ We get
\[
ax_ix_j+bx_jy_i\in (J_G,x_n,\ldots,x_{j+1})
\]
which implies that
\[
\ini_<(ax_ix_j+bx_jy_i)\in \ini_<(J_G,x_n,\ldots,x_{j+1})=((\ini_<J_G),x_n,\ldots,x_{j+1}).
\]
Here $<$ denotes the reverse lexicographic order induced by $y_1>\cdots >y_n>x_1>\cdots >x_n.$  It follows that
$x_ix_j\in \ini_<(J_G)$ or $x_jy_i\in\ini_<(J_G)$, which is impossible since the generators of degree $2$ of $\ini_<(J_G)$ are
of the form $x_ky_\ell$ with $\{k,\ell\}\in E(G)$ and $k<\ell.$

Let us consider now that $\bar{\ell}_2\in (\bar{x}_n,\ldots,\bar{x}_{j+1}):\bar{x}_j.$ We get $cx_kx_j+d x_jy_k\in (J_G,x_n,\ldots,x_{j+1}).$ If $d\neq 0,$ we obtain $x_jy_k\in (J_G,x_n,\ldots,x_{j+1})$ and, therefore, $x_jy_k\in (\ini_<(J_G),x_n,\ldots,x_{j+1})$ which implies that $x_jy_k\in \ini_<(J_G)$, a contradiction since $\{j,k\}\not\in E(G)$ by assumption. Therefore, we must have $\ell_2=cx_k$ for some $c\in K\setminus\{0\}.$ The equation $\bar{\ell}_1\bar{\ell}_2=\bar{y}_i\bar{y}_k$ implies that $cx_k(ax_i+by_i)-y_iy_k\in J_G.$ This implies that one of the monomials $x_ix_k, x_ky_i, y_iy_k$ belongs to
$\ini_<(J_G)$, contradiction.

Finally we consider the case that $i>j>k$. Then $x_if_{jk}\in J_G$, and so $\bar{f}_{jk}\in (\bar{x}_n,\ldots,\bar{x}_{i+1}):\bar{x}_i$. By similar arguments as above we show that $\bar{f}_{jk}$ is a minimal generator of $(\bar{x}_n,\ldots,\bar{x}_{i+1}):\bar{x}_i$. Suppose that there exist linear forms $\ell_1=ax_j+by_j$ and $\ell_2=cx_k+dy_k$ such that $g=f_{jk}-\ell_1\ell_2\in J_G$. Since no monomial in the support of $g$ belongs to $\ini_<(G)$ (with the monomial order as in the previous paragraph), it follows that $g\not\in J_G$, a contradiction. Hence we see that  $(\bar{x}_n,\ldots,\bar{x}_{i+1}):\bar{x}_i$ is not generated by linear forms.
\end{proof}

\section{Classes of ideals with Koszul filtration}
\label{two}

In this section we present two large classes of $K$-algebras which admit Koszul filtrations. In both cases their defining ideal also admits a quadratic Gr\"obner basis.

\begin{Theorem}\label{filtration}
Let $G$ be a closed graph. Then $R=S/J_G$ has a Koszul filtration.
\end{Theorem}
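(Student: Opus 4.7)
My plan is to construct an explicit Koszul filtration $\MF$ of $R=S/J_G$ that extends the linear flag $0\subset (\bar x_n)\subset\cdots\subset (\bar x_1,\ldots,\bar x_n)$ furnished by Theorem~\ref{closed}. Since $G$ is closed, each nonempty $N^{<}(k)$ is an interval $[d(k),k-1]$ and each nonempty $N^{>}(k)$ is an interval $[k+1,c(k)]$ (as observed in the paragraph preceding Theorem~\ref{closed}). For integers $0\le p\le n$ and $p+1\le q\le r+1\le n+1$, set
\[
L_{p,q,r}:=(\bar x_{p+1},\bar x_{p+2},\ldots,\bar x_n,\bar y_q,\bar y_{q+1},\ldots,\bar y_r)\subset R,
\]
with the convention that the $\bar y$-part is empty when $q=r+1$, so that $L_{p,r+1,r}=I_p:=(\bar x_{p+1},\ldots,\bar x_n)$. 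Take $\MF$ to be the collection of all such $L_{p,q,r}$; it contains $\mm=L_{0,1,n}$ and the full linear flag $\{I_p\}_{p=0}^{n}$.

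To verify condition~(ii) of a Koszul filtration I split into two cases. If $L=I_p$ with $p<n$, take $J=I_{p+1}$: the quotient is cyclic, generated by $\bar x_{p+1}$, and equation~\eqref{quotients} (from the proof of Theorem~\ref{closed}) identifies $J:\bar x_{p+1}=L_{p+1,p+2,c(p+1)}$, or simply $I_{p+1}$ if $N^{>}(p+1)=\emptyset$, both in $\MF$. If instead $L=L_{p,q,r}$ with $q\le r$, take $J=L_{p,q+1,r}$: the quotient is cyclic, generated by $\bar y_q$. The edge relation $\bar x_a\bar y_q=\bar x_q\bar y_a$ (for every $\{a,q\}\in E(G)$) together with $\bar x_q\in I_p\subseteq J$ (which holds because $q\ge p+1$) yields $\bar x_a\in J:\bar y_q$ for every $a\in N^{<}(q)\cap[1,p]$. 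By the interval description of $N^{<}(q)$ this upgrades to $J:\bar y_q\supseteq L_{p',q+1,r}$, where $p':=\min(p,d(q)-1)$ (with $p':=p$ when $N^{<}(q)=\emptyset$).

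The main obstacle will be the reverse inclusion $J:\bar y_q\subseteq L_{p',q+1,r}$. My plan is to prove it by showing $\bar y_q$ is a nonzerodivisor on $R/L_{p',q+1,r}$, via direct inspection of the surviving defining relations in this quotient. Each generator $f_{ab}\in J_G$ reduces either to itself when $a,b\le p'$, to the monomial $x_ay_b$ when $a\le p'<b$ (provided $y_b$ has not also been killed), or to $0$ otherwise. Consequently $y_q$ can enter a surviving relation only via a monomial $x_ay_q$ with $a\in N^{<}(q)\cap[1,p']$; but the defining property $p'\le d(q)-1$ together with $N^{<}(q)\subseteq[d(q),q-1]$ forces this intersection to be empty. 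Hence no relation in $R/L_{p',q+1,r}$ involves $y_q$, so $\bar y_q$ is a nonzerodivisor and $J:\bar y_q=L_{p',q+1,r}$. The index constraints $p'\ge 0$ (from $d(q)\ge 1$) and $q+1\ge p'+1$ (from $d(q)\le q-1$) ensure $L_{p',q+1,r}\in\MF$, completing the verification.
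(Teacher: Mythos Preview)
Your argument is correct. The key step—that $\bar y_q$ is a nonzerodivisor on $R/L_{p',q+1,r}$—is justified exactly as you say: after specializing $x_{p'+1},\ldots,x_n,y_{q+1},\ldots,y_r$ to zero, the images of the generators $f_{ab}$ of $J_G$ no longer involve $y_q$ (the only way $y_q$ could appear is through some $x_ay_q$ with $a\in N^{<}(q)\cap[1,p']$, and your choice $p'\le d(q)-1$ kills that intersection). Since the image ideal is then extended from a polynomial subring not containing $y_q$, the quotient is a polynomial ring in $y_q$ over a smaller ring, so $y_q$ is regular. Together with $J\subseteq L_{p',q+1,r}$ this yields $J:\bar y_q=L_{p',q+1,r}$, as you claim. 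The index bookkeeping for $L_{p',q+1,r}\in\MF$ and for the $I_p$-step via equation~\eqref{quotients} is also fine (including the empty-neighbourhood cases).

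Your route differs from the paper's in two respects. First, the paper works with a smaller, hand-tailored family $\MF_1\cup\MF_2\cup\MF_3\cup\{(0)\}$, whereas you take the full rectangular family of all $L_{p,q,r}$; your larger family makes the verification uniform, with a single case split instead of three. Second, the paper computes the relevant colons $L_{k,k+2,\ell_k}:\bar y_{k+1}$ and the regularity of $\bar y_s$ on $R/L_{i_k-1,s+1,\ell_k}$ via a separate Gr\"obner-basis lemma (Lemma~\ref{casetwo}), using initial ideals and reduction of $S$-polynomials; you replace that by the elementary ``$y_q$ does not appear in the generators'' observation. What you gain is a cleaner, essentially Gr\"obner-free argument; what the paper's approach buys is an explicitly small (in fact minimal in spirit) Koszul filtration, closer in shape to the linear flag, and a concrete description of each colon ideal tied to the combinatorics of $N^{<}(k+1)$ and $N^{>}(k)$.
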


For the proof of this theorem we need a preparatory result.

\begin{Lemma}\label{casetwo}
Let $0\leq k\leq n-1$, $N^>(k)=\{k+1,\ldots,\ell\}$, for some $\ell\geq k+1,$ and $N^<(k+1)=\{i,i+1,\ldots,k\}$
 for some $i\leq k.$ Then:
\begin{itemize}
\item [(a)]
$
(J_G,x_n,\ldots,x_{k+1},y_{k+2},\ldots,y_{\ell}):y_{k+1}=(J_G,x_n,\ldots,x_{k+1},x_k,\dots, x_i,y_{k+2},\ldots,y_\ell),
$
\item [(b)] For $k+2\leq s\leq \ell,$ $y_s$ is regular on
$
(J_G,x_n,\ldots,x_i,y_{s+1},\ldots,y_\ell).
$
\end{itemize}
\end{Lemma}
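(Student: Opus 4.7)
For part (a), write $N = (x_n,\ldots,x_{k+1},y_{k+2},\ldots,y_\ell)$ and $M = N + (x_k,\ldots,x_i)$. The easy direction $(J_G,M) \subseteq (J_G,N):y_{k+1}$ is essentially one line: for each $j \in \{i,\ldots,k\}$ the hypothesis $N^<(k+1) = \{i,\ldots,k\}$ gives $\{j,k+1\} \in E(G)$, so $f_{j,k+1} = x_jy_{k+1}-x_{k+1}y_j$ lies in $J_G$, and $x_{k+1} \in N$ then places $x_jy_{k+1}$ in $(J_G,N)$.

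For the reverse inclusion, the plan is to pass through initial ideals. First I would verify by Buchberger's criterion that $\MG \cup N$ and $\MG \cup M$ are Gr\"obner bases of $(J_G,N)$ and $(J_G,M)$ — where $\MG = \{f_{ab} : \{a,b\} \in E(G),\ a<b\}$ is the reduced Gr\"obner basis of $J_G$ coming from closedness — the point being that every S-polynomial between some $f_{ab}$ and a variable in the augmented set reduces to a monomial carrying a factor already in that set. This gives $\ini_<(J_G,N) = (\ini_<(J_G),N)$ and $\ini_<(J_G,M) = (\ini_<(J_G),M)$. Next I would compute the monomial colon $(\ini_<(J_G),N):y_{k+1}$ by case analysis on the divisor of $uy_{k+1}$: if that divisor is in $N$ the conclusion is immediate; if it is some generator $x_ay_b$ of $\ini_<(J_G)$, the interesting case $b = k+1$ forces $a \in N^<(k+1) = \{i,\ldots,k\}$ and produces the variables $x_i,\ldots,x_k$, while $a = k+1$ or $\{a,b\} \cap \{k+1\} = \emptyset$ already give $u \in (\ini_<(J_G),N)$; the net result is $(\ini_<(J_G),N):y_{k+1} = (\ini_<(J_G),M)$. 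Combining with the standard inclusion $\ini_<((J_G,N):y_{k+1}) \subseteq \ini_<(J_G,N):y_{k+1}$ and with the containment $(J_G,M) \subseteq (J_G,N):y_{k+1}$ from the first paragraph, the two Hilbert-function inequalities pinch, giving $(J_G,N):y_{k+1} = (J_G,M)$.

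Part (b) follows the same template with $N' = (x_n,\ldots,x_i,y_{s+1},\ldots,y_\ell)$: verify $\MG \cup N'$ is a Gr\"obner basis, compute the monomial colon $(\ini_<(J_G),N'):y_s$, and pinch with Hilbert functions to lift to the ideal-level statement $(J_G,N'):y_s = (J_G,N')$. The step I expect to be the main obstacle is a combinatorial lemma that the monomial colon calculation requires: $N^<(s) \subseteq \{i,\ldots,s-1\}$ for every $s \in \{k+2,\ldots,\ell\}$, which ensures that every $x_a$ arising from the case $b = s$ in the case analysis already lies in $N'$. To prove the lemma, closedness applied to the edges $\{k,c\}$ with $c \in N^>(k) = \{k+1,\ldots,\ell\}$ shows that $\{k,k+1,\ldots,\ell\}$ is a clique, so in particular $\{k+1,s\} \in E(G)$; if some $j < i$ had $\{j,s\} \in E(G)$, then closedness at $s$ applied to $\{j,s\}$ and $\{k+1,s\}$ (both with $s$ as the larger endpoint) would produce $\{j,k+1\} \in E(G)$, contradicting $j \notin N^<(k+1) = \{i,\ldots,k\}$.
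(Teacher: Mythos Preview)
Your proposal is correct and takes essentially the same approach as the paper: pass to initial ideals, compute the monomial colon by hand (using closedness of $G$ for the key combinatorial point in (b)), and lift back to the ideal level. The execution differs only cosmetically---the paper first rewrites $(J_G,N)$ via the restriction $H$ of $G$ to $[k]$ (respectively $[i]$) before taking initial ideals and then argues by induction on $\ini_<(f)$, whereas you verify directly that $\MG\cup N$, $\MG\cup M$, $\MG\cup N'$ are Gr\"obner bases and close with a Hilbert-function sandwich; both routes are standard and yield the same monomial computation.
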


\begin{proof}
(a). Let $r\in N^<(k+1).$ Then $x_ry_{k+1}=(x_ry_{k+1}-x_{k+1}y_r)+x_{k+1}y_r\in (J_G,x_n,\ldots,x_{k+1}).$ This shows the inclusion $\supseteq.$

For the other inclusion, let $f\in S$ such that
$
f y_{k+1}\in(J_G,x_n,\ldots,x_{k+1},y_{k+2},\ldots,y_\ell).
$
 If $H$ is the restriction of $G$ to the set $[k],$ then
$
(J_G,x_n,\ldots,x_{k+1},y_{k+2},\ldots,y_\ell)=(J_H,x_n,\ldots,x_{k+1},y_{k+2},\ldots,y_\ell,\{x_ry_j: r\leq k<j, \{r,j\}\in E(G)\}).
$
Let us observe that, if $\{r,j\}\in E(G)$ with $r\leq k<j,$ then, as $G$ is closed, we have $\{k,j\}\in E(G),$ thus $j\in
\{k+1,\ldots,\ell\}.$ Therefore, we get
\[
(J_G,x_n,\ldots,x_{k+1},y_{k+2},\ldots,y_\ell)=(J_H,x_n,\ldots,x_{k+1},y_{k+2},\ldots,y_\ell,x_iy_{k+1},\ldots,x_ky_{k+1}).
\]
By inspecting the $S$--polynomials of the generators in the right side of the above equality of ideals, it follows that
\[
\ini_<(J_G,x_n,\ldots,x_{k+1},y_{k+2},\ldots,y_\ell)=
\]
\[(\ini_<(J_H),x_n,\ldots,x_{k+1},y_{k+2},\ldots,y_\ell,x_iy_{k+1},\ldots,x_ky_{k+1}).
\] Here $<$ denotes the lexicographic order on $S=K[x_1,\ldots,x_n,y_1,\ldots,y_n]$ induced by the natural order of the variables.

It follows that $\ini_<(f)y_{k+1}\in (\ini_<(J_H),x_n,\ldots,x_{k+1},y_{k+2},\ldots,y_\ell,x_iy_{k+1},\ldots,x_ky_{k+1})$
 which implies that $\ini_<(f)\in (\ini_<(J_H),x_n,\ldots,x_{k+1},x_k,\ldots,x_i,y_{k+2},\ldots,y_\ell).$ Hence, either
$\ini_<(f)\in (x_n,\ldots,x_{k+1},x_k,\ldots,x_i,y_{k+2},\ldots,y_\ell)$ or $\ini_<(f)\in \ini_<(J_H).$ In both cases we may
proceed by induction on $\ini_<(f).$ In the first case, let $a$ be the coefficient of  $\ini_<(f)$ in $f$. Then
$g=f-a\ini_<(f)$ has $\ini_<(g)<\ini_<(f)$ and $gy_{k+1}\in (J_G,x_n,\ldots,x_{k+1},y_{k+2},\ldots,y_\ell).$ In the second case,
let $h\in J_H$ and  $c\in K\setminus\{0\}$ such that $\ini_<(h-cf)<\ini_<(f).$ Thus, if $g=h-cf, $ it follows that  $g y_{k+1}\in
(J_G,x_n,\ldots,x_{k+1},y_{k+2},\ldots,y_\ell)$ as well.

(b). Let $k+2\leq s\leq \ell.$ It is enough to show that $y_s$ is regular on the initial ideal of $(J_G,x_n,\ldots,x_i,y_{s+1},\ldots,y_\ell).$ If $H$ is the restriction of $G$ to the set $[i],$ then we get:
\[
\ini_<(J_G,x_n,\ldots,x_i,y_{s+1},\ldots,y_\ell)=
\]
\[\ini_<(J_H,x_n,\ldots,x_i,y_{s+1},\ldots,y_\ell, \{x_ry_j: r<i<j,\{r,j\}\in E(G)\})=
\]
\[
(\ini_<(J_H),x_n,\ldots,x_i,y_{s+1},\ldots,y_\ell, \{x_ry_j: r<i<j,\{r,j\}\in E(G)\}).
\]
The last equality from above may be easily checked by observing that the $S$--polynomials $S(f_{r\ell},x_ry_j)$ reduce to $0$
 for
any $r<\ell\leq i<j$ with $\{r,\ell\}\in E(H).$ We claim that $y_s$ does not divide any of the generators of
$$(\ini_<(J_H),x_n,\ldots,x_i,y_{s+1},\ldots,y_\ell, \{x_ry_j: r<i<j,\{r,j\}\in E(G)\}).$$ Obviously, $y_s$ does not
divide any of the generators of $\ini_<(J_H).$ Next, if $\{r,s\}\in E(G)$ for some $r<i<k+1<s$, then, as $G$ is closed, we get
$\{r,k+1\}\in E(G)$, contradiction to the fact that $i=\min N^<(k+1).$ This shows that none of the generators $x_ry_j$ is divisible by $y_s$.
\end{proof}

\begin{proof}[Proof of Theorem~\ref{filtration}]
Let $G$ be closed with respect to its labeling. We set $\bar{f}$ for $f \mod (J_G)\in R=S/J_G.$
For $k\in [n-1]$, let $N^{>}(k)=\{k+1,\ldots,\ell_k\}$ and $N^{<}(k+1)=\{i_k,i_k+1,\ldots,k\}$.

Let us consider the following families of ideals:
\[
\MF_1=\{(\bar{x}_n,\ldots,\bar{x}_1,\bar{y}_n,\ldots,\bar{y}_{k}): 1\leq k\leq n\}\cup\{(\bar{x}_n,\ldots,\bar{x}_k):1\leq k\leq n\},
\]
\[
\MF_2=\bigcup_{k=1}^{n-1}\{(\bar{x}_n,\ldots,\bar{x}_{k+1},\bar{y}_{k+1},\ldots,\bar{y}_{\ell_k}),
(\bar{x}_n,\ldots,\bar{x}_{k+1},\bar{y}_{k+2},\ldots,\bar{y}_{\ell_k})\},
\]
and
\[
\MF_3=\bigcup_{k=1}^{n-1}\{(\bar{x}_n,\ldots,\bar{x}_{i_k},\bar{y}_{s},\ldots,\bar{y}_{\ell_k}): k+2\leq s\leq \ell_k\}.
\]

We claim that the family
$\MF=\MF_1\cup \MF_2\cup \MF_3\cup\{(0)\}$  is a Koszul filtration of $R$. We have to check that, for every $I\in \MF,$
there exists $J\in \MF$ such that $I/J$ is cyclic and $J:I\in \MF.$

Let us  consider $I=(\bar{x}_n,\ldots,\bar{x}_1,\bar{y}_n,\ldots,\bar{y}_{k})\in \MF_1.$ Then, for
$J=(\bar{x}_n,\ldots,\bar{x}_1,$ $ \bar{y}_n,\ldots,  \bar{y}_{k+1})\in \MF_1,$ we have  $J:I=J$ since $\bar{y}_k$ is obviously
regular on $R/J.$

For $I=(\bar{x}_n,\ldots,\bar{x}_k)\in \MF_1$ with $1\leq k\leq n-1,$ we take $J=(\bar{x}_n,\ldots,\bar{x}_{k+1})\in \MF_1$.
Then,  by (\ref{quotients}), we get $J:I=(\bar{x}_n,\ldots,\bar{x}_{k+1},\bar{y}_{k+1},\ldots,\bar{y}_{\ell_k})\in \MF_2.$
In addition, for $I=(\bar{x}_n),$ we  have $(0):I=(0)$ since $\bar{x}_n$ is regular on $R.$

Let us now choose $I\in \MF_2,$ $I=(\bar{x}_n,\ldots,\bar{x}_{k+1},\bar{y}_{k+1},\ldots,\bar{y}_{\ell_k})$ for some
$1\leq k\leq n-1.$ Then, $J=(\bar{x}_n,\ldots,\bar{x}_{k+1},\bar{y}_{k+2},\ldots,\bar{y}_{\ell_k})\in \MF_2$ and, by Lemma~\ref{casetwo} (a), we have $J:I=(\bar{x}_n,\ldots,\bar{x}_{i_k},\bar{y}_{k+2},\ldots,\bar{y}_{\ell_k})\in \MF_3.$

Finally, if $I\in \MF_3,$ $I=(\bar{x}_n,\ldots,\bar{x}_{i_k},\bar{y}_{s},\ldots,\bar{y}_{\ell_k})$ for some $k+2\leq s\leq \ell_k,$ we take $J=(\bar{x}_n,\ldots,\bar{x}_{i_k},\bar{y}_{s+1},\ldots,\bar{y}_{\ell_k})\in \MF_3.$ By Lemma~\ref{casetwo} (b), we get $J:I=J$ since $\bar{y}_s$ is regular on $R/J.$
\end{proof}

One may ask for which binomial edge ideals $J_G$ the $K$-algebra  $S/J_G$ is c-universally Koszul?   The following result answers this question.

\begin{Proposition}
\label{complete}
Let $G$ be a finite simple graph. Then $S/J_G$ is c-universally Koszul, if and only if $G$ is a complete graph.
\end{Proposition}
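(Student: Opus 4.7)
My plan is to prove the two directions separately, using Corollary~\ref{toric} for the forward implication and a multidegree obstruction on a specific colon ideal for the converse.

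For the forward direction $G=K_n \Rightarrow S/J_G$ c-universally Koszul, I would observe that $J_{K_n}$ is the ideal of $2\times 2$ minors of the generic $2\times n$ matrix with rows $(x_1,\ldots,x_n)$ and $(y_1,\ldots,y_n)$. This is a toric ideal (defining the Segre embedding $\PP^1\times\PP^{n-1}$), and it is classical that the $2\times 2$ minors of a generic matrix form a universal Gr\"obner basis; in particular $J_{K_n}$ has a quadratic Gr\"obner basis with respect to the reverse lexicographic order induced by any ordering of the variables. Corollary~\ref{toric} then immediately yields that $S/J_{K_n}$ is c-universally Koszul.

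For the converse, suppose $G$ is connected but not complete (the statement should be read for graphs without isolated vertices). Choose vertices $i,k,j$ with $\{i,k\},\{k,j\}\in E(G)$ and $\{i,j\}\notin E(G)$, which exist as the midpoint and endpoints of any shortest path of length two. I target the ideal $I=(\bar{x}_i,\bar{x}_j)$. Since $J_G$ has no purely $x$-polynomials in degree two, neither $\bar{x}_i\in(\bar{x}_j)$ nor conversely, so $I$ is not cyclic. Hence in any c-universal Koszul filtration, the chosen $J\subset I$ with $I/J$ cyclic must be $(\bar{x}_i)$ or $(\bar{x}_j)$, and by symmetry it suffices to prove that $(\bar{x}_j):\bar{x}_i$ is not generated by variables.

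For this, the Pl\"ucker-type identity $y_k f_{ij}=y_j f_{ik}+y_i f_{kj}$ in $S$ gives $y_k f_{ij}\in J_G$, so $\bar{y}_k\bar{y}_j\cdot\bar{x}_i=\bar{y}_k\bar{y}_i\cdot\bar{x}_j\in(\bar{x}_j)$ and therefore $\bar{y}_k\bar{y}_j\in(\bar{x}_j):\bar{x}_i$. Using the $\ZZ^n$-grading $\deg x_\ell=\deg y_\ell=\epsilon_\ell$, the multidegree $\epsilon_k+\epsilon_j$ piece of $R$ has $K$-basis $\{\bar{x}_j\bar{x}_k,\ \bar{x}_j\bar{y}_k=\bar{x}_k\bar{y}_j,\ \bar{y}_k\bar{y}_j\}$ (three elements, since the only generator of $J_G$ of this multidegree is $f_{jk}$). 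A case check among the candidate variables $\bar{x}_j,\bar{y}_j,\bar{x}_k,\bar{y}_k$, the only variables whose multidegree can combine with another variable's to land in $\epsilon_k+\epsilon_j$, shows that only $\bar{x}_j$ lies in $(\bar{x}_j):\bar{x}_i$; the decisive point is that $\bar{y}_j\notin(\bar{x}_j):\bar{x}_i$ precisely because $\{i,j\}\notin E(G)$ forces $f_{ij}\notin J_G$. Consequently, the multidegree $\epsilon_k+\epsilon_j$ component of the ideal generated by these variables is the two-dimensional $K$-space spanned by $\bar{x}_j\bar{x}_k$ and $\bar{x}_j\bar{y}_k$, which does not contain the third basis element $\bar{y}_k\bar{y}_j$, giving the desired contradiction. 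I expect the main obstacle to be precisely this case analysis, particularly the step ruling out $\bar{y}_j$, which relies on the fact that $f_{ij}\notin J_G$ for the non-edge $\{i,j\}$.
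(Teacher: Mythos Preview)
Your proof is correct, and both directions differ from the paper's.

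For the forward implication the paper simply cites \cite[Example~1.6]{HHR}, where $S/J_{K_n}$ is shown to be strongly Koszul and hence c-universally Koszul. Your route via Corollary~\ref{toric} is a neat alternative that stays internal to the present paper, at the price of invoking the external fact that the $2\times 2$ minors of a generic $2\times n$ matrix form a universal Gr\"obner basis.

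For the converse the paper's argument is noticeably shorter. In your labelling ($k$ the common neighbour, $\{i,j\}\notin E(G)$) the paper targets the \emph{principal} ideal $(\bar x_k)$ rather than $(\bar x_i,\bar x_j)$: from
\[
x_k f_{ij}=x_j f_{ik}-x_i f_{jk}\in J_G
\]
one gets $0\neq \bar f_{ij}\in (0):\bar x_k$, while $(0):\bar x_k$ contains no variable because $J_G$ has no monomial of degree two. Thus $(0):\bar x_k$ is nonzero but has trivial degree-one part, so it cannot be generated by any subset of the variables, and already the single ideal $(\bar x_k)$ violates the filtration condition. Your choice of the two-generator ideal $(\bar x_i,\bar x_j)$ is what forces the multidegree case analysis on $(\bar x_j):\bar x_i$; the analysis is sound, but the principal-ideal trick avoids it entirely. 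Your parenthetical remark on connectedness is apt: the existence of such a length-two path requires it, and the paper's own proof tacitly relies on the same hypothesis.
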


\begin{proof} In \cite[Example 1.6]{HHR} it has been shown that $S/J_G$ is strongly Koszul if $G$ is a complete graph.  In particular, $G$ is c-universally Koszul. For the converse implication, assume that $G$ is not complete. Then there exist edges $\{i,j\}$ and $\{i,k\}$ of $G$ such that $\{j,k\}\not\in E(G)$. This implies that $x_jy_k-x_ky_j\not \in J_G$. On the other hand, $x_jy_k-x_ky_j \in J_G:x_i$, because $x_i(x_jy_k-x_ky_j)=x_j(x_iy_k-x_ky_i)-x_k(x_iy_j-x_jy_i)$.
\end{proof}

The following example shows that the converse of Theorem~\ref{filtration} is not true. In other words,   there exist Koszul non-closed
graphs $G$ such that $R=S/J_G$ has a Koszul filtration.

\begin{Example}\label{example1}{\em
Let $G$ be the graph given in Figure~\ref{example}. $G$ is not closed, but it is Koszul \cite{EHH}.

\begin{figure}[hbt]
\begin{center}
\psset{unit=0.8cm}
\begin{pspicture}(1,-2)(5,5)
\pspolygon(2,2)(3,3.71)(4,2)
\psline(3,3.71)(3,5.2)
\psline(0.6,1.1)(2,2)
\psline(4,2)(5.6,1.1)

\rput(2,2){$\bullet$} \rput(1.6,2.2){$4$}
\rput(3,3.71){$\bullet$} \rput(2.5,3.71){$2$}
\rput(4,2){$\bullet$} \rput(4.4,2.2){$3$}

\rput(3,5.2){$\bullet$} \rput(2.5,5.2){$1$}
\rput(0.6,1.1){$\bullet$} \rput(0.3,1.1){$5$}
\rput(5.6,1.1){$\bullet$} \rput(6,1.1){$6$}

\end{pspicture}
\end{center}
%\caption{}
\label{example}
\end{figure}

The ring $R=K[x_1,\ldots,x_6,y_1,\ldots,y_6]/J_G$ possesses the following Koszul filtration:

{\footnotesize
\[
\begin{array}{lll}
	(0), &(y_6), & (y_6,x_6),\\
	(y_6,y_3), & (y_6,x_6,x_5), & (y_6,x_6,y_5,x_5),\\
	(y_6,x_6,x_5,x_4), & (y_6,y_4,x_6,x_5,x_4), & (y_6,x_6,x_5,x_4,x_3),\\
	(y_6,x_6,x_5,x_4,x_3,x_2), & (y_6,y_4,x_6,x_5,x_4,x_3),& (y_6,y_4,y_3,x_6,x_5,x_4,x_3),\\
	(y_6,x_6,x_5,\ldots,x_1), & (y_6,y_2,x_6,x_5,\ldots,x_2), & (y_6,y_4,x_6,x_5,\ldots,x_2),\\
	(y_6,y_5,x_6,x_5,\ldots,x_1), & (y_6,y_5,y_4,x_6,x_5,\ldots,x_1), & (y_6,y_5,y_4,y_3x_6,x_5,\ldots,x_1),\\
	(y_6,y_5,\ldots,y_2,x_6,x_5,\ldots,x_1),& (y_6,y_5,\ldots,y_1,x_6,x_5,\ldots,x_1).
\end{array}
\]}
}
\end{Example}

In view of this example it would be of interest to classify all finite simple graphs for which $S/J_G$ has a Koszul filtration.

\medskip

Let $K$ be field and $L$ be a finite distributive lattice. We denote $K[L]$ the polynomial ring over $K$ whose variables are the elements of $L$ and denote by $I_L$ the binomial ideal in $K[L]$ generated by the binomials $ab-(a\wedge b)(a\vee b)$ with $a,b\in L$ incomparable. The ideal $I_L$ is called the {\em join-meet ideal} and  $A(L)=K[L]/I_L$ the {\em join-meet ring} of $L$ (also known as  Hibi ring).  The residue class $f+I_L\in A(L)$ of a polynomial $f\in K[L]$ will be denoted $\bar{f}$. Let $I\subset L$ be a poset ideal of $L$.   We denote by $\bar{I}$ the ideal in $A(L)$ generated by the elements $\bar{a}$ with $a\in I$.

\begin{Theorem}
\label{hibicolon}
Let $I\subset J$ be poset ideals of $L$ with $J\setminus I=\{a\}$. Then $\bar{I}:\bar{J}=\bar{H}$, where $H$ is the poset ideal $\{b\in L\: b \ngtr a\}$.
\end{Theorem}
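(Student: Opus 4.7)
The strategy is to work with the chain-monomial $K$-basis of the Hibi ring $A(L)$: every $f\in A(L)$ admits a unique expression as a $K$-linear combination of monomials $\bar{c}_1\bar{c}_2\cdots\bar{c}_k$ (with $k\geq 0$) indexed by multi-chains $c_1\leq c_2\leq\cdots\leq c_k$ in $L$. In this basis an arbitrary product $\bar{a}_1\cdots\bar{a}_k$ collapses to the unique chain monomial obtained by iteratively applying $\bar{a}\bar{b}=\overline{a\wedge b}\cdot\overline{a\vee b}$; in particular, the smallest entry of the resulting chain is $a_1\wedge\cdots\wedge a_k$. Since the generators of $\bar{J}$ besides $\bar{a}$ are already in $\bar{I}$, the identity $\bar{I}:\bar{J}=\bar{I}:\bar{a}$ is immediate, and the theorem reduces to $\bar{I}:\bar{a}=\bar{H}$, with $H=\{b\in L: b\not\geq a\}$.

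For the easy inclusion $\bar{H}\subseteq\bar{I}:\bar{a}$, fix $b\in H$. The join-meet relation gives $\bar{a}\bar{b}=\overline{a\wedge b}\cdot\overline{a\vee b}$ (tautologically when $a,b$ are comparable). Since $b\not\geq a$ one has $a\wedge b\lneq a$, and because $J=I\cup\{a\}$ is a poset ideal with $a\notin I$, every element of $L$ strictly below $a$ lies in $I$. Hence $a\wedge b\in I$, so $\bar{a}\bar{b}\in\bar{I}$.

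For the reverse inclusion $\bar{I}:\bar{a}\subseteq\bar{H}$ I would establish three statements about chain monomials of positive degree (the constant $1$ belongs neither to $\bar{I}$ nor to $\bar{H}$ and will fall out automatically). (i) $\bar{I}$ is the $K$-span of chain monomials $\bar{e}_1\cdots\bar{e}_k$ with $e_1\in I$: such a chain monomial factors as $\bar{e}_1\cdot(\bar{e}_2\cdots\bar{e}_k)\in\bar{I}$, while conversely any element of $\bar{I}$ expands, via the sorting rule above, into chain monomials whose smallest entry is $\leq b$ for some $b\in I$ and hence in $I$. (ii) The same argument applied to $H$ shows $\bar{H}$ is the $K$-span of chain monomials whose smallest entry is not $\geq a$. (iii) Multiplication by $\bar{a}$ sends the chain monomial associated with $(c_1\leq\cdots\leq c_k)$ to a chain monomial whose smallest entry is $c_1\wedge a$, and the induced map on multi-chains of a fixed length is injective. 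The injectivity in (iii) is the main technical step; I would deduce it from Hibi's toric embedding $A(L)\hookrightarrow K[z,\{t_p\}_{p\in P}]$, $\bar{\alpha}\mapsto z\prod_{p\in P_\alpha}t_p$ (with $P$ the poset of join-irreducibles of $L$), under which multiplication by $\bar{a}$ becomes multiplication by the non-zero-divisor monomial $z\prod_{p\in P_a}t_p$, which preserves distinctness of monomials.

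Now expand $f\in A(L)$ in the chain-monomial basis as $f=\lambda_0+\sum_C\lambda_C\bar{C}$ (the sum running over non-empty multi-chains). By (iii), the sum $f\bar{a}=\lambda_0\bar{a}+\sum_C\lambda_C(\bar{C}\bar{a})$ is already the basis expansion of $f\bar{a}$, so by (i) the membership $f\bar{a}\in\bar{I}$ amounts to $\lambda_0=0$ (because $a\notin I$) together with $c_1(C)\wedge a\in I$ for every non-empty $C$ with $\lambda_C\neq 0$. A short case analysis shows $c_1\wedge a\in I$ iff $c_1\not\geq a$: if $c_1\geq a$ then $c_1\wedge a=a\notin I$, whereas if $c_1\not\geq a$ then $c_1\wedge a\lneq a$, which forces $c_1\wedge a\in I$ since $I$ contains every element strictly below $a$. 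By (ii) this is exactly the condition $f\in\bar{H}$, finishing the proof.
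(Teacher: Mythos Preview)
Your proof is correct and rests on the same foundational fact as the paper's---that chain monomials form a $K$-basis of $A(L)$, equivalently Hibi's theorem that $\ini_\prec(I_L)$ is generated by products of incomparable pairs---but the execution of the hard inclusion $\bar{I}:\bar{a}\subseteq\bar{H}$ is genuinely different. The paper works upstairs in $K[L]$: it chooses a reverse-lexicographic order $\prec$ compatible both with the lattice order and with the partition $I\subset L$, observes that $\ini_\prec(I_L,I)=(\ini_\prec(I_L),I)$, and then argues by induction on $\ini_\prec(f)$, at each step extracting from $a\cdot\ini_\prec(f)\in(\ini_\prec(I_L),I)$ a variable $b$ incomparable to $a$ dividing $\ini_\prec(f)$ and peeling it off. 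You instead work directly in $A(L)$: the toric embedding (hence the fact that $A(L)$ is a domain) gives at once that multiplication by $\bar{a}$ sends distinct chain monomials to distinct chain monomials, so the chain-basis expansion of $f\bar{a}$ can be read off from that of $f$, and the membership condition becomes the elementary equivalence $c_1\wedge a\in I\iff c_1\not\geq a$. Your route avoids choosing a term order and running an induction, and makes the combinatorics more transparent; the paper's route has the virtue of staying inside the Gr\"obner-basis framework used throughout the article and not invoking the toric realisation.
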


\begin{proof}
Let  $b\in L$ with $b \ngtr a$. Then $\bar{a}\bar{b}=(\bar{a}\wedge \bar{b})(\bar{a}\vee \bar{b})\in \bar{I}$, since $a\wedge b<a$. This shows that $\bar{H}\subset \bar{I}:(\bar{a})=\bar{I}:\bar{J}$. In order  to prove  the converse inclusion, we let $\bar{f}\in \bar{I}:(\bar{a})$ and may assume that $\bar{f}\not\in \bar{I}$. Thus $af \in (I_L, I)$ and $f\notin (I_L, I)$. Now we choose a total order $\prec$ of the variables such that $a\prec b$ if $a>b$ in $L$ and such that $a\prec b$ if $a\notin I$ and $b\in I$, and denote again by $\prec$ the reverse lexicographic order induced by $\prec$. Then $\ini_\prec(I_L,I)=(\ini_\prec(I_L),I)$,  and it follows that $a\ini_\prec(f)\in (\ini_\prec(I_L),I)$. By a classical result of Hibi (see \cite[Theorem 10.1.3]{HH}, $\ini_\prec(I_L)$ is generated by all monomials $bc$ with $b,c\in L$ incomparable. Thus
\[(\ini_\prec(I_L),I)=(\{bc\:\; b,c\in L\setminus I \text{ with $b,c$ incomparable}\}, I).
\]
Since $f\notin (I_L, I)$, we may assume that $f$ is in standard form with respect to $(I_L, I)$ and $\prec$.
In other words, we may assume that no monomial in the support of $f$ belongs to $(\ini_\prec(I_L),I)$. On the other hand, since $a\ini(f)\in (\ini_\prec(I_L),I)$ it follows that one of the generating monomials of $(\ini_\prec(I_L),I)$ divides $a\ini_<(f)$. The only monomials among the monomial generators which can divide $a\ini_\prec(f)$ must be of the form $ab$ with $a$ and $b$ incomparable. Thus $b$ divides $\ini_\prec(f)$ and $b\in H$. Let $g=f-\lambda \ini_\prec(f)$ where $\lambda$ is the leading coefficient of $f$. Since  $\bar{g}\in \bar{I}:(\bar{a})$ and $\ini_\prec(g)\prec \ini_\prec(f)$, induction completes the proof
\end{proof}

\begin{Corollary}
\label{hibikoszul}
Let $L$ be a finite distributive lattice. Then the  family $$\MF=\{\bar{I}\:\; \text{$I$ is a poset ideal of $L$}\}$$ of ideals is a Koszul filtration of $A(L)$. In particular, for each poset ideal $I$ of $L$,  the ideal $\bar{I}\subset A(L)$ has a linear resolution.
\end{Corollary}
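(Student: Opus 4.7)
The plan is to verify directly the two defining properties of a Koszul filtration for the family $\MF=\{\bar{I}\:\; I \text{ is a poset ideal of } L\}$, using Theorem~\ref{hibicolon} as the main engine, and then appeal to \cite[Proposition 1.2]{CTV} for the linear resolution conclusion.

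First, I would note that $L$ itself is a poset ideal of $L$, so $\mm=\bar{L}\in\MF$, which gives property (i). For property (ii), let $\bar{I}\in\MF$ be nonzero, so $I$ is a nonempty poset ideal of $L$. I would then pick any maximal element $a$ of the finite poset $I$ and set $J=I\setminus\{a\}$. The maximality of $a$ inside $I$ ensures that $J$ remains a poset ideal of $L$: if $b\in J$ and $c\leq b$, then $c\in I$ (since $I$ is a poset ideal) and $c\neq a$ (else $a=c\leq b<a$ or $a=b$, contradicting $a\notin J$). Thus $\bar{J}\in\MF$ and $\bar{I}/\bar{J}$ is the cyclic module generated by the image of $\bar{a}$.

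The critical step is showing $\bar{J}:\bar{I}\in\MF$. Since $J\cup\{a\}=I$ and $J\subset I$ with $I\setminus J=\{a\}$, Theorem~\ref{hibicolon} applies and gives $\bar{J}:\bar{I}=\bar{H}$, where $H=\{b\in L\: b\ngtr a\}$. I then have to verify that $H$ is itself a poset ideal of $L$. Suppose $b\in H$ and $c\leq b$ in $L$; if $c>a$, then $b\geq c>a$ contradicts $b\in H$, so $c\in H$. Hence $H$ is a poset ideal and $\bar{H}\in\MF$, completing the verification of (ii).

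Having established that $\MF$ is a Koszul filtration of $A(L)$, the final assertion that each $\bar{I}$ has a linear resolution is then an immediate invocation of \cite[Proposition 1.2]{CTV}. I do not anticipate a genuine obstacle here: the only subtlety is the verification that both $J=I\setminus\{a\}$ and $H=\{b\: b\ngtr a\}$ are poset ideals, which comes down to elementary order-theoretic observations about maximal elements, and the main content has already been packaged into Theorem~\ref{hibicolon}.
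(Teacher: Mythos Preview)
Your proposal is correct and is precisely the argument the paper has in mind: the corollary is stated without proof because it follows immediately from Theorem~\ref{hibicolon} by choosing, for each nonempty poset ideal $I$, a maximal element $a\in I$, setting $J=I\setminus\{a\}$, and observing that both $J$ and the colon $H=\{b\in L: b\ngtr a\}$ are again poset ideals. The only cosmetic point is that your parenthetical justification for $c\neq a$ is slightly garbled; the clean statement is that $c=a$ would force $a\leq b$ with $b\in I$, whence $a=b$ by maximality of $a$ in $I$, contradicting $b\in J$.
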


In \cite{HHR} all finite distributive lattices $L$  for which $A(L)$ is strongly Koszul are classified. Among them are the Boolean lattices. Thus  if $B$ is a Boolean lattice, then $B$ admits  the Koszul filtration consisting of all  ideals of the form $\bar{U}$ with $U$ a subset of $B$, and by Corollary~\ref{hibikoszul}, $B$  also admits the Koszul filtration  consisting of all poset ideals of $B$. These are already two different Koszul filtrations of $A(B)$.

An {\em upset} in a partially ordered set $P$ is a subset $J$ with the property that if $x\in J$ and $y\geq x$, then $y\in J$. Since reversion of the partial order in a distributive lattice $L$ defines again  a distributive lattice, it follows from Corollary~\ref{hibikoszul}, that the collection of ideals $\bar{J}$ with $J\subset L$ an upset form a Koszul filtration. So for any Boolean lattice we have now three different Koszul filtrations. One obtains even more Koszul filtrations by observing that if $\MF_1$ and $\MF_2$ are Koszul filtrations of  a standard graded $K$-algebra $R$, then $\MF_1\union \MF_2$ is a Koszul filtration of $R$ as well. Thus any standard graded $K$-algebra $R$ which admits a Koszul filtration, also admits a Koszul filtration  which among all Koszul filtrations  of $R$ is {\em maximal}  with respect to inclusion. The maximal Koszul filtration is of interest because it gives a large family of ideals of linear forms with linear resolution.

We say a Koszul filtration $\MF$ of $R$ is {\em minimal}, if no proper subset of $\MF$ is a Koszul filtration of $R$.  In general the Koszul filtration of $A(L)$ given in Corollary~\ref{hibikoszul} is not minimal. To see this, first notice that each of the poset ideals   $H$ in  Theorem~\ref{hibicolon} is co-generated by a single element. Thus the following observation is immediate.  Suppose $\MI$ is a set of poset ideals of $L$ satisfying the following conditions:
\begin{enumerate}
\item[(i)] all poset ideals co-generated by an element  of $L$,  and  $L$ itself belong to $\MI$;
\item[(ii)] for  all $I\in\MI$ there exists $J\subset I$ such that $|I\setminus J|=1$.
\end{enumerate}
Then $\MF=\{\bar{I}\:\; I\in\MI\}$ is a Koszul filtration of $A(L)$.

In general a set $\MI$ of poset ideals of $L$ satisfying (i) and (ii) may be different from the set of all poset ideals of $L$. For example, let $B_3$ be the Boolean lattice of rank $3$ whose elements we may identify with the subsets of $[3]$.
Then the set $\MF$ consisting of all poset ideals of $B_3$
except the poset ideal $\{\{3\},\emptyset\}$ satisfies (i) and (ii).

\end{document}